\DeclarePairedDelimiter{\ceil}{\lceil}{\rceil}
\providecommand{\U}[1]{\protect\rule{.1in}{.1in}}
\theoremstyle{plain}
\newtheorem{corollary}{Corollary}
\newtheorem{definition}{Definition}
\newtheorem{lemma}{Lemma}
\newtheorem{theorem}{Theorem}
\numberwithin{equation}{section}
\newcommand{\eps}{\epsilon}
\begin{document}
\title[Instability of the Nikodym Bounds on Manifolds]{On Instability of the Nikodym Maximal Function bounds over Riemannian Manifolds}
\author{Christopher D. Sogge}
\author{Yakun Xi}
\author{Hang Xu}

\address{Department of Mathematics\\
	Johns Hopkins University\\
	Baltimore, MD 21218, USA}
\email{sogge@jhu.edu, ykxi@math.jhu.edu,
	 hxu@math.jhu.edu}
\thanks{}
\subjclass{42B25} \keywords {Kakeya-Nikodym Problem, Nikodym maximal function, Oscillatory integral.}
\dedicatory{ }

\begin{abstract}
We show that, for odd $d$, the $L^{\frac{d+2}2}$ bounds of Sogge \cite{sogge} and Xi \cite{xi} for the Nikodym maximal function over manifolds of constant sectional curvature, are unstable with respect to metric perturbation, in the spirit of the work of Sogge and Minicozzi \cite{MS}. A direct consequence is the instability of the bounds for the corresponding oscillatory integral operator. Furthermore, we extend our construction to show that the same phenomenon appears for any $d$-dimensional Riemannian manifold with a local totally geodesic submanifold of dimension $\ceil{\frac{d+1}2}$ if $d\ge 3$.  In contrast, Sogge's $L^\frac73$ bound for the Nikodym maximal function on 3-dimensional variably curved manifolds is stable with respect to metric perturbation. 
\end{abstract}
\maketitle
\section{Introduction}
A classical Nikodym set  $N$ in Euclidean space $\mathbb R^d$ is a measure one subset of the unit cube $[0,1]^d$, which has the property that for each $x\in N$, there is a straight line $\gamma_x$ such that $\gamma_x\cap N=\{x\}$. Because of these, the relative compliment $\mathcal N=[0,1]^d\setminus N$ must be a set of measure zero, which contains a line segment passing through each point of $N$.

It is implicit in the work of C\'ordoba that such a set $\mathcal N$ in $\mathbb R^2$ must have Hausdorff dimension 2. The Nikodym set conjecture asserts that all such set $\mathcal N$ in $\mathbb R^d$ should have Hausdorff dimension equal to $d$.

The so-called maximal Nikodym conjecture is actually a stronger conjecture that involves the following Nikodym maximal function
\[f_\delta^{**}(x)=\sup\frac{1}{|T_x^\delta|}\int_{T_x^\delta}|f(y)|dy.\]
where $T_x^\delta$ is a $1\times\delta\times\cdots\times\delta$ tube with central axis $\gamma_x$ passing through $x\in\mathbb R^d$. This maximal conjecture (formulated by C\'ordoba \cite{cordoba}) says for any $\epsilon>0$,
\begin{equation}
\label{I1}
\|f_\delta^{**}\|_{L^d(\mathbb R^{d})}\le C_\epsilon \delta^{-\epsilon}\|f\|_{L^d(\mathbb R^d)}.
\end{equation}
Interpolating with the trivial $L^1\rightarrow L^\infty$ bound, we see \eqref{I1} is equivalent to
\begin{equation}
\label{I2}
\|f_\delta^{**}\|_{L^q(\mathbb R^{d})}\le C_\epsilon\delta^{1-\frac{d}{p}-\epsilon}\|f\|_{L^p(\mathbb R^d)},
\end{equation}
where $1\le p\le d$ and $q=(d-1)p'$.

Tao \cite{tao} showed that in $\mathbb R^d$, a bound like \eqref{I2} is equivalent to the corresponding bound for the Kakeya maximal function $f^*_\delta$, thus the maximal Nikodym conjecture and the maximal Kakeya conjecture are equivalent in Euclidean space.

It is well-known (see Lemma 2.15 in \cite{bourgain}) that for a given $p$, \eqref{I2} implies that the set $\mathcal N$ must have Hausdorff dimension at least $p$.
For the case $d=p=2$, \eqref{I1} was proved by C\'ordoba \cite{cordoba}. However, it is still open for any $d\ge 3$. When $p=(d+1)/2,\ q=(d-1)p'=d+1$, \eqref{I2} follows from Drury \cite{drury} in 1983. In 1991, Bourgain \cite{bourgain} improved this result for each $d\ge3$ to some $p(d)\in((d+1)/2,(d+2)/2)$ by the so-called bush argument, where Bourgain considered the \textquotedblleft{bush}\textquotedblright{} where lots of tubes intersect at a given point. Four years later, Wolff \cite{wolff} generalized Bourgain's bush argument to the hairbrush argument, by considering tubes with lots of \textquotedblleft bushes\textquotedblright{} on them. Combining this hairbrush argument and the induction on scales, Wolff proved the following bound.
\begin{theorem}[Wolff \cite{wolff}]\label{THM1}The Nikodym maximal function satisfies
\begin{equation}
\label{I3}
\|f_\delta^{**}\|_{L^\frac{(d-1)(d+2)}{d}(\mathbb R^d)}\le C_\epsilon\delta^{1-\frac{2d}{d+2}-\epsilon}\|f\|_{L^{\frac{d+2}{2}}(\mathbb R^d)}.
\end{equation}
\end{theorem}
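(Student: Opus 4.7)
The plan is to follow Wolff's hairbrush method and close it with an induction on scales. First, I would pass from the maximal inequality to its Kakeya-type dual: via Tao's Kakeya--Nikodym equivalence, together with linearization of $f_\delta^{**}$ and dyadic pigeonholing on the values of $f$ and of the multiplicity function, \eqref{I3} is equivalent to a level-set statement for essentially distinct $\delta$-tubes with $\delta$-separated directions. Concretely, given such a family $\{T_j\}_{j=1}^N$ and given $\lambda>0$, one needs to bound the measure of the level set $E_\lambda=\{x\in\mathbb{R}^d:\sum_{j=1}^N\chi_{T_j}(x)\ge\lambda\}$ by a quantity that, after summation over dyadic $\lambda$ and optimization in $N$, yields the Lebesgue exponents in \eqref{I3}. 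A preliminary two-ends reduction, carried out by further dyadic pigeonholing over subsegments of the tubes, lets me assume that for every $T_j$ the set $T_j\cap E_\lambda$ is not concentrated on any subsegment much shorter than $T_j$ itself.

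Next I would build the hairbrush. Pick a stem $T_0$ maximizing the one-dimensional measure of $T_j\cap E_\lambda$ up to constants, and let the hairbrush $H$ consist of the tubes $T_j$, $j\neq 0$, meeting $T_0\cap E_\lambda$ transversally (angle bounded away from $0$). The $\delta$-separation of directions splits $H$ into $\sim\delta^{-1}$ families, one per $2$-plane through $T_0$, with $\lesssim\delta^{-1}$ tubes each, so $|H|\lesssim\delta^{-2}$; while the definition of $E_\lambda$ forces $|H|\gtrsim|T_0\cap E_\lambda|\,\lambda/\delta$. In each $2$-plane $\Pi$ through $T_0$ I would invoke C\'ordoba's planar Nikodym $L^2$ estimate, exploiting that two $\delta$-tubes in $\mathbb{R}^2$ meeting at angle $\theta$ overlap in an area $\lesssim\delta^2/\theta$; integrating the resulting $L^2$ bound transverse to $T_0$ yields an estimate on $|E_\lambda|$ of the desired shape, but with a polynomial loss in $\delta^{-1}$ relative to \eqref{I3}.

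The final step is to remove this loss by induction on scales. One considers an intermediate scale $\rho\in(\delta,1)$, thickens the tubes to width $\rho$ and applies the hairbrush estimate at that scale; the two-ends property ensures that each original $\delta$-tube still contributes a substantial length inside a localized region, and after rescaling by $\rho^{-1}$ one recovers a collection of $(\delta/\rho)$-tubes with $(\delta/\rho)$-separated directions to which the inductive hypothesis applies. Choosing $\rho$ optimally converts the polynomial loss into the $\delta^{-\epsilon}$ factor appearing in \eqref{I3}. The hardest step is orchestrating this induction cleanly: one must track all dyadic parameters introduced by the two-ends reduction (stem length, transversality angle, per-plane density of the hairbrush) and verify that after rescaling the $(\delta/\rho)$-tubes remain essentially distinct and direction-separated, so every pigeonholing loss can be absorbed into the $\delta^{-\epsilon}$ factor.
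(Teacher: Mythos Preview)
The paper does not prove Theorem~\ref{THM1}; it is quoted as a known result of Wolff and serves only as background for the paper's own theorems on curved-space Nikodym bounds. So there is no ``paper's own proof'' to compare against---the relevant benchmark is Wolff's original argument in \cite{wolff}.

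Your outline is essentially Wolff's hairbrush argument and is sound at the strategic level: two-ends reduction, hairbrush built on a stem, planar $L^2$ (C\'ordoba) estimates plane by plane, and induction on scales to trade the polynomial loss for $\delta^{-\epsilon}$. One quantitative slip: in $\mathbb{R}^d$ the $\delta$-separated $2$-planes through the stem $T_0$ form a $(d-2)$-parameter family, so there are $\sim\delta^{-(d-2)}$ such planes, not $\sim\delta^{-1}$, and the hairbrush bound is $|H|\lesssim\delta^{-(d-1)}$ rather than $\delta^{-2}$; your numbers are the $d=3$ special case. This does not break the scheme---the per-plane $L^2$ estimate still gives the right gain once the correct count is used---but you should carry the general-$d$ numerology through, since that is where the exponent $\frac{d+2}{2}$ actually emerges. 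A second minor point: invoking Tao's Kakeya--Nikodym equivalence is convenient but anachronistic relative to \cite{wolff}; Wolff proved the Kakeya and Nikodym versions in parallel, and for a self-contained write-up you could work directly with the Nikodym tubes.
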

As mentioned before, \eqref{I3} implies that the Hausdorff dimension of $\mathcal N$ is at least $(d+2)/2$. This is still the best result for the Nikodym set conjecture when $d=3, 4$. One can get better results for larger $d$ or for the weaker Minkowski dimension, see e.g. \cite{bourgain2}, \cite{KLT}, \cite{KT}.

Even though the Kakeya set is not well-defined for curved space, one can naturally extend the definition of the Nikodym set and the corresponding maximal function to manifolds. 

\begin{definition} For a Riemannian manifold $(M,g)$. We call $\mathcal N\subset M$ a Nikodym-type set if there exists a set $\mathcal N^*\subset M$ with positive measure such that the for each point $x\in\mathcal N^*$, there exists a geodesic $\gamma_x$ passing through $x$ with $|\gamma_x\cap \mathcal N|>0$.
\end{definition}
\begin{definition} We define $f^{**}_\delta$ to be the Nikodym maximal function over a Riemannian manifold $(M,g)$, such that

 \[f_\delta^{**}(x)=\sup\frac{1}{|T_{\gamma_x}^\delta|}\int_{T_{\gamma_x}^\delta}|f(y)|dy,\]
where $T_{\gamma_x}^\delta$ is the $\delta$-neighborhood of a geodesic segment $\gamma_x$  of length $\beta$ that passes through $x$. Here $\beta$ can be chosen to be any fixed number less than one half of the injective radius of $(M,g)$.

\end{definition}

In 1997, Minicozzi and Sogge \cite{MS} showed for a general manifold, Drury's result for $p=(d+1)/2$ still holds, but surprisingly, counter examples were constructed to show that it is indeed sharp in odd dimensions. 

\begin{theorem}[Minicozzi and Sogge \cite{MS}]
\label{trivialbound}
Given $(M^d,g)$ of dimension $d\ge2$, then for $f$ supported in a compact subset $K$ of a coordinate patch and all $\epsilon>0$
\[
\|f_\delta^{**}\|_{L^q(M^d,g)}\le C_\epsilon \delta^{1-\frac{d}{p}-\epsilon}\|f\|_{L^p(M^d,g)}.
\]
where ${ p=\frac{d+1}2}$ and ${ q=(d-1)p'}$.

Furthermore, there exists an arbitrary small perturbation of the Euclidean metric, $\tilde g$, such that over $(\mathbb R^d,\tilde g)$, the above estimate breaks down for $(M^d,g)=(\mathbb R^d,\tilde g)$ if ${ p>\ceil{\frac{d+1}2}}$. \footnote{Here $\ceil{{}\cdot{}}$ is the usual  ceiling function, i.e. $\ceil{\frac{d+1}2}$ denotes the smallest integer greater than $\frac{d+1}2.$}
\end{theorem}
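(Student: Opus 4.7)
I would adapt Drury's Euclidean $L^{(d+1)/2}\to L^{d+1}$ argument to a coordinate patch on $(M,g)$. The only geometric input in the Euclidean proof is the two-tube intersection bound $|T^\delta_{\gamma_j}\cap T^\delta_{\gamma_k}|\lesssim \delta^d/\angle(\gamma_j,\gamma_k)$; this transfers to any smooth Riemannian manifold with constants determined by the $C^2$ geometry of the fixed coordinate patch, since geodesics are straight lines to leading order in $\delta$ in normal coordinates. A standard $TT^*$/tube-counting argument then delivers the Drury endpoint at $p=(d+1)/2$, $q=d+1$, and interpolation with the trivial $L^1\to L^\infty$ bound gives the full range of estimates asserted in the first part of the theorem.

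\noindent\textbf{Construction of the perturbation.} For the instability, I would construct $\tilde g$ as an arbitrarily small $C^\infty$-perturbation of the Euclidean metric, supported in a small coordinate ball $B_\rho\subset\mathbb R^d$, such that $(\mathbb R^d,\tilde g)$ admits a local totally geodesic submanifold $\Sigma$ of dimension $k=\ceil{\frac{d+1}2}$ exhibiting antipodal focusing of geodesics like a round $k$-sphere. A concrete model: inside $B_\rho$ take $\tilde g$ to equal a product metric $g_{S^k(R)}\oplus g_{\mathrm{Eucl}}^{\,d-k}$ with $R\sim\rho$, smoothly interpolated back to the Euclidean metric outside; shrinking $\rho$ makes the perturbation as small in $C^\infty$ as desired. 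The resulting submanifold $\Sigma$ inherits a round $k$-sphere structure, and its geodesic flow possesses a $(k-1)$-parameter family of focusing great circles through each focal pair.

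\noindent\textbf{Counterexample and exponent matching.} With $\tilde g$ in hand, I would test the claimed inequality against a function $f$ concentrated in $\Sigma$, say a smooth bump supported in a $\delta$-neighborhood of a short piece of $\Sigma$ near a focal pair $(p_0,q_0)$. The antipodal focusing of the $(k-1)$-parameter family of geodesics in $\Sigma$ from $p_0$ forces $f_\delta^{**}$ to be large on a focal set $F$ that is substantially larger than the Euclidean intuition would suggest, because the exponential map degenerates at conjugate points and crams many geodesic directions into a small region. Computing $\|f\|_{L^p}$ against a pointwise lower bound on $f_\delta^{**}|_F$ and the measure $|F|$ yields a lower bound on $\|f_\delta^{**}\|_{L^q}$ that, compared to the predicted upper bound $C_\epsilon\delta^{1-d/p-\epsilon}\|f\|_{L^p}$, fails exactly when $p>k=\ceil{\frac{d+1}2}$.

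\noindent\textbf{Main obstacle.} The delicate step is selecting $f$ and verifying the measure of $F$ so that the exponents match the sharp threshold $p=k$. A naive test (one single tube or a single $\delta$-ball) gives only a weaker threshold, so the construction must genuinely exploit the full $(k-1)$-parameter focusing family afforded by the totally geodesic $k$-submanifold. This reduces to an analysis of the exponential map and Jacobi fields on the product $S^k(R)\times \mathbb R^{d-k}$ near a conjugate point, verifying that the focusing survives the smooth interpolation of $\tilde g$ back to Euclidean outside $B_\rho$ and produces a focal set with the predicted $(d-k)$-dimensional transverse extent. This is where the parity of $d$ enters: when $d$ is odd, $k=(d+1)/2$ is an integer and the counterexample saturates the Wolff-type estimate; when $d$ is even, only a weaker threshold $p>(d+2)/2$ is obtained, matching the exponent in the theorem.
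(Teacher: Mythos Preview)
The paper does not prove this theorem---it is quoted from Minicozzi--Sogge \cite{MS}---but Sections~2--4 carry out the same construction in greater generality, so we can compare your sketch with that template.

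Your outline of the positive (Drury) bound is fine in spirit and is not discussed further in the paper.

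For the negative part, however, your proposed mechanism is not the one that works, and it has a genuine gap. You suggest inserting a round $S^k(R)\times\mathbb R^{d-k}$ piece with $R\sim\rho$ and then shrinking $\rho$ to make the perturbation $C^\infty$-small. This is false: to reach a conjugate point inside the ball $B_\rho$ you need $R\lesssim\rho$, but then the sectional curvature is $\sim\rho^{-2}$ and the metric coefficients differ from Euclidean by $O(1)$ already in $L^\infty$, let alone $C^\infty$. No amount of shrinking $\rho$ makes a spherical cap containing antipodal points close to flat. Moreover, antipodal focusing compresses many geodesics into a small set, which is the wrong direction for a Nikodym counterexample: you need a \emph{low}-dimensional set through which pass geodesics that then fill a \emph{full}-dimensional region.

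The construction actually used (here and in \cite{MS}) is a ``fan-out'' rather than a focusing. One keeps the flat totally geodesic plane $N=\{x'=0\}$ and perturbs the \emph{cometric} by adding off-diagonal terms
\[
2\alpha_\varepsilon(x_1)\sum_{i=2}^{k} dp_i\,dp_{d-k+i},
\]
where $\alpha_\varepsilon$ is smooth, vanishes for $x_1\le 0$, is positive for $x_1>0$, and has $\|\alpha_\varepsilon\|_\infty<\varepsilon$. For $x_1\le 0$ the metric is unchanged, so $N$ remains totally geodesic there; but for $x_1>0$ a geodesic with tangential momentum $\theta$ acquires normal velocity $\alpha_\varepsilon(x_1)\theta$, and a Jacobian computation (Lemmas~\ref{lemma3} and \ref{lemmaodd}) shows that the map $(a,\theta,s)\mapsto x(a,\theta;s)$ has full rank for small $s>0$. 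Thus the geodesics starting in $N\cap\{x_1<0\}$ sweep out an open set $\mathcal N^*\subset M$. Testing against $f^\delta=\chi_{\{x_1<0,\ |x'|<\delta\}}$ gives $\|f^\delta\|_p\sim\delta^{(d-k)/p}$ while $(f^\delta)^{**}_\delta\gtrsim 1$ on $\mathcal N^*$, which breaks the estimate exactly when $p>k=\ceil{\frac{d+1}2}$. No conjugate points, no curvature blow-up, and the perturbation is genuinely $L^\infty$-small.
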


In 1999, Sogge \cite{sogge} managed to adapt Wolff's method for the generalized Nikodym maximal function to 3-dimensional manifolds with constant curvature. Combining a modified version of Wolff's multiplicity argument with an auxiliary maximal function, Sogge proved the following.
\begin{theorem}[Sogge, \cite{sogge}] \label{THMS}Assume that $\mathrm(M^3,g\mathrm)$ has constant sectional curvature. Then for $f$ supported in a compact subset $K$ of a coordinate patch and all $\epsilon>0$
\begin{equation}
\|f_\delta^{**}\|_{L^\frac{10}{3}(M^3,g)}\le C_\epsilon \delta^{-\frac{1}{5}-\epsilon}\|f\|_{L^\frac{5}{2}(M^3,g)}.
\end{equation}
\end{theorem}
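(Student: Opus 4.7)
The plan is to adapt Wolff's hairbrush argument from the Euclidean $\mathbb{R}^3$ setting to the constant sectional curvature manifold $(M^3,g)$, following the general scheme in Wolff's proof of Theorem \ref{THM1} but replacing the Euclidean geometric inputs by their constant-curvature analogs. First I would localize $f$ to a small coordinate patch and, using the exponential map at a reference point, realize each geodesic tube $T^\delta_{\gamma_x}$ as the $\delta$-neighborhood of a smooth curve in $\mathbb{R}^3$. Then I would dualize and discretize: after the standard pigeonholing, the target estimate reduces to bounding, for a $\delta$-separated collection $\{T_j\}$ of such tubes, the $L^{5/3}$-norm of $\sum_j \chi_{T_j}$ by $C_\epsilon \delta^{-\epsilon}$ times the appropriate power of the cardinality of the collection.

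Next I would run Wolff's hairbrush decomposition. One selects a tube $T_0$ which is heavily intersected by the others, pigeonholes to assume the intersecting tubes $T_j$ meet $T_0$ in a $\delta$-neighborhood of a fixed base point $p \in T_0$, and further pigeonholes on the angle $\theta$ that $T_j$ makes with $T_0$. The key geometric input at this step is a planar $L^2$ bound for $\delta$-tubes lying (up to $\delta$) in a $2$-surface, which in the Euclidean case is Córdoba's lemma. On $(M^3,g)$, the $2$-surface that naturally appears is the one swept out by geodesics through $p$ whose initial directions lie near a fixed great circle on $S_pM \cong S^2$. Here the constant curvature hypothesis is essential: the Jacobi equation has explicit sinusoidal or hyperbolic-sinusoidal solutions, so these $2$-surfaces are totally geodesic submanifolds of constant curvature, and the planar Nikodym $L^2$ estimate on them reduces, via the exponential map and the explicit Jacobi computation, to the flat Córdoba estimate with uniform constants as $\delta \to 0$.

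With this auxiliary $2$-dimensional maximal inequality in hand, one transfers Wolff's multiplicity/two-ends argument to the manifold setting: the hairbrush is enclosed in a $\delta$-neighborhood of such a totally geodesic $2$-surface (up to acceptable error depending on $\theta$ and the length scale $\beta$), and the planar $L^2$ bound controls the overlaps. Combining this with Wolff's induction on scales, bootstrapped through the trivial $p = (d+1)/2 = 2$ bound of Theorem \ref{trivialbound}, yields the exponent $p=5/2$, $q=10/3$ with the loss $\delta^{-1/5-\epsilon}$.

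The main obstacle will be the second step: verifying that on a constant curvature $(M^3,g)$ one really does get a quantitative quasi-transversality principle and a planar $L^2$ Kakeya-type estimate with the same dependence on $\delta$ as in $\mathbb{R}^3$. In variable curvature, this fails because families of geodesics through a point do not generically form totally geodesic surfaces, and the resulting $2$-surfaces can bend too much for the Córdoba estimate to apply uniformly; indeed this is the source of the instability asserted in the title. For constant curvature, one controls this by working in geodesic polar coordinates centered at $p$ where the metric has the explicit form $dr^2 + s_K(r)^2\,d\omega^2$, so that the Jacobi fields, and hence the geometry of the hairbrush, are just rescaled copies of the Euclidean model. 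Once this geometric reduction is carried out, the rest of Wolff's induction-on-scales machinery applies almost verbatim.
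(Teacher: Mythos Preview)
The paper does not contain its own proof of Theorem~\ref{THMS}; it is quoted as a result of Sogge~\cite{sogge}. However, the paper does summarize Sogge's method in the surrounding text, and your proposal diverges from that description in one essential respect.

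You propose to close the argument by running Wolff's induction on scales on $(M^3,g)$. The paper explicitly notes that Sogge's proof \emph{avoided} the induction-on-scales step, precisely because that argument ``is hard to perform in curved space.'' This is not a cosmetic difference. Induction on scales in $\mathbb{R}^d$ relies on the affine self-similarity of the family of tubes: a $\delta$-tube, rescaled by a factor $1/\rho$, becomes a $(\delta/\rho)$-tube of the same type. On a curved manifold there is no global dilation carrying geodesic $\delta$-tubes to geodesic $(\delta/\rho)$-tubes, so the recursive hypothesis at scale $\delta/\rho$ does not automatically feed back into the estimate at scale $\delta$. Your sketch does not address how you would overcome this, and the paper's commentary suggests it is a genuine obstruction rather than a routine adaptation.

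According to the paper, what Sogge actually did was combine ``a modified version of Wolff's multiplicity argument with an auxiliary maximal function,'' bypassing the induction entirely. Your outline of the hairbrush geometry in constant curvature (totally geodesic $2$-planes via the explicit Jacobi fields, reduction to C\'ordoba's planar $L^2$ bound) is on the right track for the geometric input, but the concluding mechanism you invoke is not the one Sogge used, and as stated it has a real gap.
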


In his proof, Sogge was able to avoid the induction on scales argument, which is hard to perform in curved space. Xi managed to generalize Sogge's result to any dimension $d\ge3$ \cite{xi}. Therefore, Wolff's bounds hold for all manifolds with constant curvature.

\begin{theorem}[Xi, \cite{xi}] \label{THMX}Assume that $\mathrm(M^d,g\mathrm)$ has constant sectional curvature with $d>3$. Then for $f$ supported in a compact subset $K$ of a coordinate patch and all $\epsilon>0$
\begin{equation}
\|f_\delta^{**}\|_{L^\frac{(d-1)(d+2)}{d}(M^d,g)}\le C_\epsilon \delta^{1-\frac{2d}{d+2}-\epsilon}\|f\|_{L^\frac{d+2}{2}(M^d,g)}.
\end{equation}
\end{theorem}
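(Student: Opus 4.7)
The plan is to adapt Wolff's hairbrush argument to the constant-curvature setting by following and generalising Sogge's framework \cite{sogge} for dimension three, while carefully avoiding induction on scales, which is problematic in curved space. The bound must therefore be extracted directly from the incidence geometry of geodesic tubes, using the rigidity of constant-curvature geometry in place of Euclidean translation invariance.

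First I would pass from the maximal-function bound to the standard discretised linear inequality for a family $\{T_a^\delta\}$ of $\delta$-tubes around unit-length geodesic segments with $\delta$-separated directions. Since $(M^d,g)$ is locally isometric to an open subset of $S^d$, $\mathbb R^d$, or $H^d$, I would work in geodesic polar coordinates centred at an arbitrary base point, or pass to a model space, where geodesics and totally geodesic submanifolds have explicit algebraic descriptions. The crucial geometric input is that, in constant sectional curvature, any two intersecting geodesics lie in a unique totally geodesic $2$-plane, and more generally there is a totally geodesic submanifold of each dimension $k$ through every point tangent to every $k$-plane in the tangent space. From this one proves the curved analogue of Wolff's two-tube lemma: if $T_a^\delta$ and $T_b^\delta$ meet at angle $\theta\gtrsim\delta$, then $|T_a^\delta\cap T_b^\delta|\lesssim \delta^d/\theta$, and their union is contained in the $\delta$-neighbourhood of the unique totally geodesic $2$-plane determined by their directions at the intersection. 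This is exactly the fact that propels the Euclidean argument through to the constant-curvature case, and it is precisely what fails under generic metric perturbation (cf.\ \cite{MS}).

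With this input in place, I would run a hairbrush-style argument in Sogge's form: choose a tube $T_0$ crossed transversally by a maximal subfamily, construct its hairbrush, and decompose the remaining tubes according to whether they are nearly coplanar with $T_0$ (in some totally geodesic $2$-plate containing $T_0$) or transverse to every such $2$-plate. The nearly coplanar tubes are controlled by an auxiliary maximal function averaging $f$ over $\delta$-neighbourhoods of totally geodesic $2$-plates; the transverse tubes are handled by a C\'ordoba--Wolff $L^2$ orthogonality estimate inside each such $2$-plate using the two-tube lemma. Summing over $T_0$, interpolating with the trivial bound, and optimising the contributions yields the target exponents $p=(d+2)/2$ and $q=(d-1)(d+2)/d$, bypassing induction on scales because the constant curvature already supplies the exact incidence geometry at every scale.

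The main obstacle will be the higher-dimensional form of the auxiliary maximal-function estimate. In three dimensions the auxiliary object is a $2$-plate, so the estimate reduces essentially to C\'ordoba's planar Kakeya--Nikodym bound, which is sharp. For $d\ge 4$ there is a whole hierarchy of totally geodesic $k$-planes with $2\le k\le d-1$, and one must pick the correct auxiliary $k$-plate maximal function (and iterate through this hierarchy in the right order) so that each step closes without induction on scales. Proving the sharp $L^p$ bound for this auxiliary maximal function over totally geodesic $k$-plates, and then successfully feeding it back into the hairbrush estimate, is the technical core. Constant curvature is precisely what guarantees enough totally geodesic $k$-planes and the uniform incidence geometry needed for this step to succeed, which is consistent with the argument working here but failing for generic metrics.
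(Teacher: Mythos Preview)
The paper does not contain a proof of this statement. Theorem~\ref{THMX} is stated as a cited result of Xi~\cite{xi}; the present paper merely quotes it as background before turning to its main subject, the \emph{instability} of such bounds under metric perturbation. There is therefore no proof here against which to compare your proposal.

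That said, your outline is consistent with the one-sentence description the authors give of Xi's argument (``Xi managed to generalize Sogge's result to any dimension $d\ge 3$''), namely an adaptation of Wolff's hairbrush method in Sogge's form, exploiting the abundance of totally geodesic subspaces in constant curvature and avoiding induction on scales. Whether the details you sketch---in particular the hierarchy of auxiliary $k$-plate maximal functions---match Xi's actual implementation cannot be assessed from this paper; you would need to consult~\cite{xi} directly.
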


In this paper, we show that the bounds of Sogge \cite{sogge} and Xi are also unstable with respect to  metric perturbation.

\begin{theorem}\label{constant}
Given $(M^d,g)$ of dimension $d\ge3$ with constant sectional curvature, for every $\varepsilon>0$, there exists a metric $ g_\varepsilon$, such that $\|g^{ij}-g_\varepsilon^{ij}\|_{L^\infty}\le\varepsilon$, and over $(M^d, g_\varepsilon)$, the estimate 
\[
\|f_\delta^{**}\|_{L^q(M^d, g_\varepsilon)}\le C_\epsilon \delta^{1-\frac{d}{p}-\epsilon}\|f\|_{L^p(M^d, g_\varepsilon)}.
\]
breaks down if ${ p>\ceil{\frac{d+1}2}}$.
\end{theorem}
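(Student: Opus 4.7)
The plan is to reduce Theorem \ref{constant} directly to the Euclidean instability result of Minicozzi and Sogge (the second half of Theorem \ref{trivialbound}) via a localization in normal coordinates, exploiting the fact that a constant sectional curvature manifold is close to Euclidean space in normal coordinates on a small ball. The essential observation is that the Nikodym maximal function $f_\delta^{**}$ is a local object: since the tube length $\beta$ may be chosen as small as one wishes (subject only to being below half the injectivity radius), the failure of the bound depends only on the metric in a $(\beta+\delta)$-neighborhood of $\mathrm{supp}(f)$. Thus it suffices to plant a bad perturbation inside an arbitrarily small geodesic ball around any fixed interior point $p\in M^d$.

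Fix $p\in M^d$ and use normal coordinates on $U_p := \exp_p(B_\rho(0))$. Because $(M^d,g)$ has constant sectional curvature, the standard expansion $g_{ij}(x)=\delta_{ij}+\tfrac{1}{3}R_{ikjl}\,x^k x^l+O(|x|^3)$ gives $\|g-g_{\mathrm{euc}}\|_{L^\infty(B_\rho)}\le C\rho^2$, with $C$ depending only on the curvature. Choose $\rho$ below the injectivity radius and so small that $C\rho^2\le\varepsilon/2$. Invoke Theorem \ref{trivialbound} with perturbation size $\varepsilon/2$ to select a metric $\tilde g$ on $\mathbb R^d$ with $\|\tilde g-g_{\mathrm{euc}}\|_{L^\infty}\le\varepsilon/2$ together with compactly supported $f_n$ and scales $\delta_n\to 0$ witnessing the failure of the Nikodym bound on $(\mathbb R^d,\tilde g)$ for every $p_0>\lceil(d+1)/2\rceil$. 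After a coordinate dilation $x\mapsto sx$, under which the $L^\infty$ metric-perturbation size is invariant and the failure ratio $\|f_\delta^{**}\|_{L^q}/\bigl(\delta^{1-d/p-\epsilon}\|f\|_{L^p}\bigr)$ still diverges along the rescaled sequence, followed by a smooth cutoff on $\tilde g-g_{\mathrm{euc}}$, we may assume that the perturbation and the supports of all $f_n$ lie inside $B_{\rho/4}(0)$, and fix $\beta<\rho/4$ so that all witnessing tubes stay inside $B_{\rho/2}(0)$.

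Using a cutoff $\chi$ on $M^d$ with $\chi\equiv 1$ on $\exp_p(B_{\rho/2})$ and $\chi\equiv 0$ outside $U_p$, define
\[
g_\varepsilon \,=\, \chi\,(\exp_p^{-1})^*\tilde g \,+\, (1-\chi)\,g.
\]
Then $g_\varepsilon=(\exp_p^{-1})^*\tilde g$ on $\exp_p(B_{\rho/2})$, $g_\varepsilon=g$ outside $U_p$, and the triangle inequality yields
\[
\|g_\varepsilon-g\|_{L^\infty(M^d)}\le\|\tilde g-g_{\mathrm{euc}}\|_{L^\infty}+\|g-g_{\mathrm{euc}}\|_{L^\infty(B_\rho)}\le\varepsilon.
\]
Since $\exp_p$ is an isometry from $(B_{\rho/2},\tilde g)$ onto $(\exp_p(B_{\rho/2}),g_\varepsilon)$, the transported functions $f_n\circ\exp_p^{-1}$ (supported in $\exp_p(B_{\rho/4})$) have Nikodym maximal values on $(M^d,g_\varepsilon)$ at least as large as those of $f_n$ on $(\mathbb R^d,\tilde g)$; hence they witness the failure of the bound on $(M^d,g_\varepsilon)$ for every $p_0>\lceil(d+1)/2\rceil$.

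\textbf{Main obstacle.} The chief technical task is to localize Minicozzi and Sogge's counterexample inside an arbitrarily small ball. This requires: (i) compactly supported witnessing functions, which is natural for constructions built from characteristic functions of tube-like sets; (ii) a compactly supported metric perturbation, obtained by a smooth cutoff applied to $\tilde g-g_{\mathrm{euc}}$, which at most disturbs geodesics outside the relevant compact family; (iii) persistence of the failure under the coordinate dilation $x\mapsto sx$, which holds because both the $L^\infty$ perturbation size and the exponent $1-d/p$ are scale-invariant (with $\delta$ rescaling accordingly). None of these items requires a new geometric idea beyond a careful inspection of the Minicozzi--Sogge argument; the main substance of the paper likely lies in the subsequent extension to general Riemannian manifolds carrying a totally geodesic submanifold of dimension $\lceil(d+1)/2\rceil$, where no background Euclidean model is available.
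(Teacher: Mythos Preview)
Your reduction is correct and workable, but it is \emph{not} the route the paper takes. The paper proves Theorem~\ref{main} directly---constructing, on any $(M^d,g)$ with a local totally geodesic submanifold $N$ of dimension $\ceil{\tfrac{d+1}{2}}$, an explicit cometric perturbation of the form $g_\varepsilon = g + 2\varphi(x)\alpha_\varepsilon(x_1)\sum dp_i\,dp_{d+i}$ in adapted coordinates, and then analyzing the linearized geodesic flow to show that a $(2d{-}1)$-parameter family of geodesics lying in $N$ for $x_1\le 0$ sweeps out an open set for $x_1>0$. Theorem~\ref{constant} then follows as a one-line corollary, since a space of constant sectional curvature has totally geodesic submanifolds of every dimension (e.g.\ images under $\exp_p$ of linear subspaces of $T_pM$), so in particular of dimension $\ceil{\tfrac{d+1}{2}}$.

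Your approach instead transplants the Minicozzi--Sogge Euclidean counterexample into a small normal-coordinate ball, exploiting only that $g_{ij}=\delta_{ij}+O(|x|^2)$ there. This is more elementary for Theorem~\ref{constant}, and as you implicitly observe, it does not actually use the constant-curvature hypothesis: the normal-coordinate expansion holds on any Riemannian manifold, so your argument in fact shows instability on \emph{every} $(M^d,g)$, a statement nominally stronger than Theorem~\ref{main}. What the paper's approach buys in exchange is a more intrinsic construction: the low-dimensional Nikodym set on $(M,g_\varepsilon)$ is a piece of a \emph{fixed} totally geodesic submanifold of the original $(M,g)$, rather than a flat patch grafted in through normal coordinates; and the argument is self-contained rather than relying on the internal details (compactly supported perturbation and test functions, behavior under dilation) of the Minicozzi--Sogge construction that you correctly flag as needing inspection.
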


We prove this in the spirit of Minicozzi and Sogge \cite{MS}, by constructing a metric perturbation so that the Nikodym-type set could be concentrated inside a submanifold of dimension $\ceil{\frac{d+1}2}$. 

Indeed, we shall prove that this instability is generic, in the sense that for any Riemannian manifolds $(M^d,g)$ with a local totally geodesic submanifold of dimension $\ceil{\frac{d+1}2}$, we can always perturb the metric locally, to make the trivial bound as in Theorem \ref{trivialbound} to be best possible.

\begin{theorem}\label{main}
Given $(M^d,g)$ of dimension $d\ge3$ such that $M^d$ has a  local totally geodesic submanifold of dimension $\ceil{\frac{d+1}{2}}$. Then for every $\varepsilon>0$, there exists a metric $ g_\varepsilon$, such that $\|g^{ij}-g_\varepsilon^{ij}\|_\infty\le\varepsilon$, and over $(M^d, g_\varepsilon)$, the estimate
\[
\|f_\delta^{**}\|_{L^q(M^d, g_\varepsilon)}\le C_\epsilon \delta^{1-\frac{d}{p}-\epsilon}\|f\|_{L^p(M^d, g_\varepsilon)}.
\]
breaks down if ${ p>\ceil{\frac{d+1}2}}$. Indeed, there exist Nikodym type sets of dimension ${\ceil{\frac{d+1}2}}$ on $(M^d, g_\varepsilon)$.
\end{theorem}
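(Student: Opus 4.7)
The plan is to follow the blueprint of Minicozzi--Sogge \cite{MS} and construct a small perturbation $g_\varepsilon$ of $g$ whose local structure around the given totally geodesic submanifold $\Sigma^k$, with $k=\ceil{\frac{d+1}{2}}$, is a product metric in a small ball. In that product region a classical $k$-dimensional Euclidean Nikodym set transplanted into the leaf $\Sigma$ inherits the Nikodym-type property in the ambient manifold via the totally geodesic condition; the failure of the estimate for $p>k$ then follows from a manifold analogue of Bourgain's Lemma~2.15 applied to this Hausdorff-dimension-$k$ Nikodym-type set.

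Concretely, I would pick $p_0\in\Sigma$ and work in Fermi coordinates $(y,z)\in\mathbb R^k\times\mathbb R^{d-k}$ centered at $p_0$, so that $\Sigma=\{z=0\}$ locally. Since $\Sigma$ is totally geodesic its second fundamental form vanishes along $\Sigma$, which in these coordinates is equivalent to $\partial_{z^a}g_{ij}(y,0)=0$ for tangential indices $1\le i,j\le k$. Hence $g_{ij}(y,z)=g_{ij}(y,0)+O(|z|^2)$. Let $\tilde g$ denote the ``frozen product'' metric obtained by taking $\tilde g_{ij}(y,z):=g_{ij}(y,0)$ on the tangential block and $\tilde g_{ab}(y,z):=\delta_{ab}$ on the normal block, and let $\chi\in C_c^\infty(B_{2r}(p_0))$ satisfy $\chi\equiv 1$ on $B_r(p_0)$. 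Set $g_\varepsilon:=\chi\,\tilde g+(1-\chi)\,g$. The $O(|z|^2)$ expansion combined with $|z|\le 2r$ on $\mathrm{supp}\,\chi$ gives $\|g^{ij}-g_\varepsilon^{ij}\|_{L^\infty}\lesssim r^2$, so choosing $r\asymp\sqrt\varepsilon$ secures the required perturbation bound. Crucially, on $B_r(p_0)$ the metric $g_\varepsilon$ is a genuine direct sum, so every leaf $\{z=z_0\}\cap B_r$ is a totally geodesic copy of $(\Sigma\cap B_r,h)$ with $h=g_{ij}(y,0)dy^idy^j$, and geodesics of $(\Sigma,h)$ are geodesics of $(M,g_\varepsilon)$.

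With this product core in hand, I would construct the Nikodym-type set inside the central leaf. Rescaling $B_r$ to a unit-sized ball makes $h$ arbitrarily close to the Euclidean metric on $\mathbb R^k$, so Nikodym's classical construction transplants and yields a set $\mathcal N\subset\Sigma$ of Hausdorff dimension $k$ together with a positive-measure source $\mathcal N^*\subset\Sigma$ such that for each $x\in\mathcal N^*$ there is a geodesic $\gamma_x$ of $(\Sigma,h)$ through $x$ satisfying $|\gamma_x\cap\mathcal N|>c>0$. Since $\Sigma$ remains totally geodesic in $(M,g_\varepsilon)$, the curves $\gamma_x$ are geodesics of $(M,g_\varepsilon)$ as well, so $\mathcal N$, viewed as a subset of $M$, is a Nikodym-type set of Hausdorff dimension exactly $k=\ceil{\frac{d+1}{2}}$. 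Invoking the manifold analogue of Bourgain's Lemma~2.15 -- namely that the estimate with exponent $p$ forces every Nikodym-type set to have Hausdorff dimension at least $p$ -- gives the failure of the bound for every $p>k$. As an alternative one tests the estimate directly against $f=\mathbf 1_{N_\delta(\mathcal N)}$, where $\|f\|_{L^p}\asymp\delta^{(d-k)/p}$ while the level-set foliation ensures $f_\delta^{**}\gtrsim 1$ on a set whose size is incompatible with the proposed right-hand side $\delta^{1-k/p-\epsilon}$ for $p>k$.

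The main obstacle I expect is the dimensional bookkeeping needed to confirm that the sharp failure threshold is $p=k$ rather than something larger: one must arrange the construction so that the $\delta$-tubes along the level-set geodesics populate the maximal function on a sufficiently large $d$-dimensional region without inflating the Hausdorff dimension of $\mathcal N$ itself beyond $k$. This is the combinatorial heart of Minicozzi--Sogge's argument, and the product structure of $g_\varepsilon$ on $B_r$ -- which gives a totally geodesic foliation by $k$-dimensional leaves all isometric to $(\Sigma\cap B_r,h)$ -- is what allows the Euclidean Nikodym data to be exploited at the right scale. Once this is in place, the same argument works uniformly in $d\ge 3$, yielding both the failure of the estimate for $p>\ceil{\frac{d+1}{2}}$ and the existence of Nikodym-type sets of dimension $\ceil{\frac{d+1}{2}}$ on $(M^d,g_\varepsilon)$; specializing to $M$ of constant sectional curvature recovers Theorem~\ref{constant}.
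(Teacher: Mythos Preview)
Your approach has a genuine gap: a product metric cannot produce a Nikodym-type set inside a single leaf. In the metric $g_\varepsilon=\chi\,\tilde g+(1-\chi)\,g$ you build, the geodesics on the product core $B_r$ split as $(\gamma^\Sigma(s),\,z_0+sv)$ with $\gamma^\Sigma$ a geodesic of $(\Sigma,h)$ and $v\in\mathbb R^{d-k}$. Hence a geodesic through any point $(y_0,z_0)$ with $z_0\neq 0$ either stays in the leaf $\{z=z_0\}$ (if $v=0$) or meets $\{z=0\}$ in at most one point. Consequently, for \emph{any} $\mathcal N\subset\Sigma$ the set $\mathcal N^*=\{x\in M:\exists\,\gamma_x\ \text{with}\ |\gamma_x\cap\mathcal N|>0\}$ is contained in $\Sigma$ itself and therefore has measure zero in $M$. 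Your $\mathcal N$ is not a Nikodym-type set in the paper's sense, so Bourgain's Lemma~2.15 does not apply. The direct test fails for the same structural reason: with $f=\mathbf 1_{N_\delta(\mathcal N)}$ one has $f_\delta^{**}\gtrsim 1$ only on $\{|z|\lesssim\delta\}$, giving $\|f_\delta^{**}\|_{L^q}\asymp\delta^{(d-k)/q}$, which is \emph{not} large compared with $\delta^{1-d/p}\|f\|_{L^p}\asymp\delta^{1-k/p}$ (for $d=3$, $k=2$, $p=\tfrac52$, $q=\tfrac{10}3$ this is $\delta^{3/10}$ versus $\delta^{1/5}$). The issue you flag in your last paragraph---getting $f_\delta^{**}\gtrsim 1$ on a set of \emph{fixed} $d$-dimensional volume---is precisely what the product construction cannot deliver.

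The paper's perturbation is the opposite of a product: it adds the off-block term $2\alpha_\varepsilon(x_1)\sum_{i} p_i\,p_{d+i}$ to the cometric, with $\alpha_\varepsilon\equiv 0$ on $\{x_1\le 0\}$ and $\alpha_\varepsilon>0$ on $\{x_1>0\}$. Thus $N$ remains totally geodesic where $x_1\le 0$, but for $x_1>0$ the tangential momenta $p_i$ feed into the normal velocities $\dot x_{d+i}$. A variational (Jacobi-type) computation shows that the map $(a,\theta,s)\mapsto x(a,\theta;s)$, where $a$ parametrizes initial points on $N$ and $\theta$ the initial tangential directions, has nonvanishing Jacobian for small $s>0$; hence these geodesics fill a fixed open set $\mathcal N^*\subset M$. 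Taking $f^\delta=\mathbf 1_{\{x_1<0,\ |x'|<\delta\}}$ one gets $(f^\delta)_\delta^{**}\gtrsim 1$ on all of $\mathcal N^*$, so $\|(f^\delta)_\delta^{**}\|_{L^q}/\|f^\delta\|_{L^p}\gtrsim\delta^{-(d-k)/p}$, which beats $\delta^{1-d/p}$ exactly when $p>k$. No classical Nikodym construction inside $\Sigma$ is used; the Nikodym-type set is simply the slab $N\cap\{x_1<0\}$, and the whole point is the \emph{coupling} perturbation that makes its geodesics fan out into a $d$-dimensional region.
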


Even though the numerology here (and in the work of Sogge and Minicozzi) may seem a bit odd at first, it can be easily understood through a simple parameter counting. For a piece of totally geodesic submanifold $\mathcal N^n\subset M^d$ of dimension $n$ to be a Nikodym-type set, there must be a collection of geodesic segments within $\mathcal N$ such that their extensions fill up a $d$-dimensional set $\mathcal N^*$. We know that the family of geodesics in $\mathcal N$ locally can be parametrized using $2n-2$ parameters, together with the 1-parameter coming from each geodesic, we must have $2n-2+1\ge d$, which clearly shows that $n=\ceil{\frac{d+1}{2}}$ is the smallest possible choice.

A direct consequence of the above results is that the corresponding oscillatory integral operator, which has the Riemannian distance function $d_g(x,y)$ as the phase function, cannot have preferable stable bound with respect to the metric perturbation. Also, in contrast, by a simple compactness argument, the $L^{\frac73}$ bound of Sogge \cite{sogge} is stable with respect to metric perturbation.

Our paper is organized as follows. In the next section, as a model case, we shall prove Theorem \ref{main} for 3-dimensional manifolds with a 2-dimensional totally geodesic submanifold, since things are much simpler in this case, yet it still provides the essential insights to this problem.  In section 3, we shall prove Theorem \ref{main} for $(2d+1)$-dimensional manifolds with the help of a simple ODE lemma. In section 4, we finish the proof of Theorem \ref{main} by pointing out how to easily generalize the proof to the even dimensional cases. Theorem \ref{constant} then follows as a corollary to Theorem \ref{main}. Finally, in the last section, we shall briefly discuss what Theorem \ref{main} tells us about the related oscillatory integral operators.

\textit{Acknowledgements.}
The third author would like to thank Professor Hamid Hezari, Prfessor Zhiqin Lu and Professor Bernard Shiffman for their constant support and mentoring. 

\section{Instability of Nikodym Bounds in Dimension $3$}
We work on a Riemannian three-fold $(M^3,g)$ with a totally geodesic, two dimensional submanifold $N^2$. In a local coordinate chart $(U_1, (x_1,x_2,x_3))$, without loss of generality, we can assume $N\cap U_1=\{x_3=0\}$ and $\frac{\partial}{\partial x_3}$ is the unit normal vector of $N\cap U_1$. Further, we assume that in coordinates $(x_1,x_2)$ on $N\cap U_1$, the cometric can be written as 
\begin{equation*}
	ds^2=dp_1^2+\tilde g^{22}(x_1,x_2)dp_2^2+dp_3^2, \quad \mbox{when } x_3=0,
\end{equation*}
where $\tilde g^{22}(x_1,x_2)=g^{22}(x_1,x_2,0)$. This can be done, for example, by choosing the polar coordinates on $N$.
Since $N\subset M$ is totally geodesic, the metric tenor must satisfy
\begin{equation*}
	\frac{\partial g^{ij}}{\partial x_3}\Big|_{x_3=0}=0, \quad \mbox{for } 1\leq i,j\leq 2.
\end{equation*}
Therefore, by taking the Taylor expansion of each $g^{ij}(x_1,x_2,x_3)$ at $x_3=0$,
\begin{equation*}
	ds^2=dp_1^2+\tilde g^{22}(x_1,x_2)dp_2^2+dp_3^2+x_3\sum_{i=1}^32h^{i3}dp_idp_3+x_3^2\sum_{1\leq i,j\leq 2}2f^{ij}dp_idp_j,
\end{equation*}
where $h^{i3}$, $1\le i\le3$, and $f^{ij}$, $1\leq i,j\leq 3$, are certain smooth functions of variable $x=(x_1,x_2,x_3)\in U_1$ for  with $f^{ij}=f^{ji}$. The factor $2$'s in the last two terms are added only to simplify our calculations.

To prove Theorem \ref{main} in this model case, we shall seek a small perturbation $g_\varepsilon$ of the metric $g$ such that in $(M,g_\varepsilon)$, there exists a Nikodym-type set of dimension $2$. For a fixed small $\varepsilon>0$, we start by fixing a function $\alpha=\alpha_\varepsilon\in C^\infty(\mathbb R)$, which satisfies that\begin{itemize}
	\item $\alpha_\varepsilon(t)=0$ for $t\leq 0$.
	\item $\alpha_\varepsilon(t)>0$ for $t>0$.
	\item $|\alpha_\varepsilon(t)|<\varepsilon$ for any $t\in \mathbb{R}$.
\end{itemize}
Let $U\subset\subset U_1$ be a relative compact subset and let $\varphi(x)\in C_0^\infty(U_1)$ be a compactly supported bump function such that $\varphi|_{U}=1$. Define 
\begin{equation*}
	g_{\varepsilon}=g^{ij}dp_idp_j+2\varphi(x)\alpha_\varepsilon(x_1)dp_2dp_3.
\end{equation*}
When $\varepsilon$ is sufficiently small, $g_\varepsilon$ is still positive definite and hence a Riemannian cometric on $M$. In the following lemma, we will investigate the geodesics in $U$ with respect to $g_\varepsilon$. For simplicity, we may assume $U=(-\delta_0,\delta_0)^3$

\begin{lemma}\label{lemma3} For $x\in U$, let
	\begin{equation*}
		H(x,p)=\frac{1}{2}\Big(p_1^2+g^{22}(x_1,x_2)p_2^2+p_3^2+x_3\sum_{i=1}^32h^{i3}p_ip_3+x_3^2\sum_{1\leq i,j\leq 2}2f^{ij}p_ip_j\Big)+\alpha(x_1)p_2p_3
	\end{equation*}
be the Hamiltonian associated to the cometric $g_\varepsilon$. 
Given $\theta\in(-1,1), a\in (-\delta_0,\delta_0)$, we denote the unique geodesic with initial position $x(0)=(0,a,0)$ and initial momentum $p(0)=(\sqrt{1-\theta^2},\theta,0)$ as $x(a,\theta;s),$
then we have $$x(a,0;s)=(s,a,0).$$
Furthermore, when $\theta=0$ and $a=0$, there exists an $0<s<\delta_0$, such that the Jacobian determinant of the map \[(a,\theta,s)\rightarrow x(a,\theta,s)\] is nonzero. 
\end{lemma}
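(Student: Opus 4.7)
The plan has two pieces. First, for the formula $x(a, 0; s) = (s, a, 0)$, I would substitute the candidate trajectory $(x, p)(s) = (s, a, 0, 1, 0, 0)$ directly into Hamilton's equations $\dot x_i = \partial H/\partial p_i$, $\dot p_i = -\partial H/\partial x_i$. Every term of $H$ outside the principal part $\tfrac{1}{2}(p_1^2 + \tilde g^{22} p_2^2 + p_3^2)$ carries at least one factor from the set $\{x_3, p_2, p_3\}$, each of which vanishes on the candidate, and so do all the corresponding first $(x,p)$-derivatives there. A direct check of the six equations confirms that the candidate solves the flow with the prescribed initial data, and uniqueness then gives $x(a, 0; s) = (s, a, 0)$.

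For the Jacobian at $a = \theta = 0$, the formula just proved yields $\partial_s x = (1, 0, 0)$ and $\partial_a x = (0, 1, 0)$, so cofactor expansion gives
\[
\det\frac{\partial (x_1, x_2, x_3)}{\partial (a, \theta, s)}\bigg|_{a=\theta=0} = \partial_\theta x_3(0, 0; s),
\]
and it suffices to show this is nonzero for some $s \in (0, \delta_0)$. Introduce the Jacobi variables $y_i(s) = \partial_\theta x_i|_{a=\theta=0}$ and $q_i(s) = \partial_\theta p_i|_{a=\theta=0}$, which satisfy the linearization of Hamilton's equations along the base geodesic $(s, 0, 0, 1, 0, 0)$ with initial data $y(0) = 0$, $q(0) = (0, 1, 0)$. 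Because $x_3 = p_2 = p_3 = 0$ along the base curve, the only nonvanishing entries of the Hessian of $H$ at the base point are $H_{p_1 p_1} = H_{p_3 p_3} = 1$, $H_{p_2 p_2} = \tilde g^{22}(s, 0)$, $H_{p_2 p_3} = \alpha(s)$, $H_{p_3 x_3} = h^{13}(s, 0, 0)$, and $H_{x_3 x_3} = 2 f^{11}(s, 0, 0)$. Plugging in, one finds $\dot q_1 = \dot q_2 = 0$, so $q_1 \equiv 0$ and $q_2 \equiv 1$, and the $(y_3, q_3)$-components decouple to the two-by-two inhomogeneous linear system
\[
\dot y_3 = h^{13}(s, 0, 0)\, y_3 + q_3 + \alpha(s), \qquad \dot q_3 = -2 f^{11}(s, 0, 0)\, y_3 - h^{13}(s, 0, 0)\, q_3,
\]
with $y_3(0) = q_3(0) = 0$; here the $\alpha(s)$-forcing arises from the cross term $H_{p_2 p_3} = \alpha$ acting on $q_2 \equiv 1$.

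Finally, I would argue by contradiction. Suppose $y_3 \equiv 0$ on some $(0, \epsilon)$. The first ODE then forces $q_3(s) = -\alpha(s)$ there; differentiating and substituting into the second yields the scalar linear ODE $\alpha'(s) + h^{13}(s, 0, 0)\, \alpha(s) = 0$ on $(0, \epsilon)$, which combined with $\alpha(0) = 0$ forces $\alpha \equiv 0$ by uniqueness, contradicting $\alpha > 0$ on $(0, \infty)$. Hence $y_3$ is nonzero at arbitrarily small $s > 0$, and the Jacobian is nonzero there. The main obstacle is the bookkeeping of the Hessian of $H$ in the middle step: one must correctly identify which cross-partials survive on the base curve and in particular isolate the single new forcing term $H_{p_2 p_3} = \alpha(s)$ contributed by the metric perturbation, since this cross-term is where all the nontrivial dynamics originate.
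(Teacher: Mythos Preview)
Your proposal is correct and follows essentially the same route as the paper: verify the explicit geodesic by direct substitution, reduce the Jacobian at $a=\theta=0$ to $\partial_\theta x_3$, derive the same linearized $(y_3,q_3)$-system with forcing $\alpha(s)$ coming from $H_{p_2p_3}$, and finish by contradiction. The only cosmetic difference is in the last step: the paper first uses the $q_3$-equation (with $y_3\equiv 0$ and $q_3(0)=0$) to conclude $q_3\equiv 0$, which immediately contradicts the $y_3$-equation since $\alpha(s)>0$; your reordering via $q_3=-\alpha$ and the induced linear ODE for $\alpha$ is equally valid.
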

\begin{proof}
%Without loss of generalities, we may assume $g^{22}(0,0)=1$. 
To verify that the curves $x(a,0;s)=(s,a,0)$ are geodesics for our metric, we can look at the Hamiltonian system 
\begin{equation*}\begin{dcases} 
\dfrac{dp}{ds}=-\dfrac{\partial H}{\partial x},\\
\dfrac{dx}{ds}=\dfrac{\partial H}{\partial p}.
\end{dcases}
\end{equation*} with initial data $x(0)=(0,a,0)$, $p(0)=(1,0,0)$. This system generates the geodesic flow over the cotangent bundle, see e.g. \cite{hangzhou}. By a straightforward calculation, the Hamiltonian system becomes
\begin{equation}\label{Hamiltonian System1}
	\begin{dcases}
	\dfrac{dx_1}{ds}=p_1+x_3^2\sum_{i=1}^22f^{i1}p_i+x_3h^{13}p_3,
	\\
	\dfrac{dx_2}{ds}=\tilde g^{22}(x_1,x_2)p_2+x_3^2\sum_{i=1}^22f^{i2}p_i+x_3h^{23}p_3+\alpha(x_1)p_3,	
	\\
	\dfrac{dx_3}{ds}=p_3+x_3\sum_{i=1}^3h^{i3}p_i+x_3h^{33}p_3+\alpha(x_1)p_2,	
	\\
	\dfrac{dp_1}{ds}=-\frac{1}{2}\tilde g_{x_1}^{22}(x_1,x_2)p_2^2-x_3^2\sum_{1\le i,j\le2}f^{ij}_{x_1}p_ip_j-x_3\sum_{i=1}^3h^{i3}_{x_1}p_ip_3-{\alpha'}(x_1)p_2p_3,
	\\
	\dfrac{dp_2}{ds}=-\frac{1}{2}\tilde g_{x_2}^{22}(x_1,x_2)p_2^2-x_3^2\sum_{1\le i,j\le 2}f^{ij}_{x_2}p_ip_j-x_3\sum_{i=1}^3h^{i3}_{x_2}p_ip_3,
	\\
	\dfrac{dp_3}{ds}=-x_3\sum_{1\le i,j\le 2}2f^{ij}p_ip_j-x_3^2\sum_{1\le i,j\le2}f^{ij}_{x_3}p_ip_j-\sum_{i=1}^3h^{i3}p_ip_3-x_3\sum_{i=1}^3h^{i3}_{x_3}p_ip_3.
	\end{dcases}
\end{equation}
It is then straightforward  to check that 
\begin{equation}\label{Solution of Hamiltonian System1}
	\begin{dcases}
	x(a,0;s)=(s,a,0)\\
	p(a,0;s)=(1,0,0)
	\end{dcases}
\end{equation} solve the system \eqref{Hamiltonian System1}.

Now we verify the second part of Lemma \ref{lemma3}. Note that \[x(a,0;s)=(x_1(a,0;s),x_2(a,0;s),x_3(a,0;s))=(s,a,0).\] Thus, when $\theta=0$, the Jacobian is given by
\begin{equation*}
|J|=\left|\dfrac{\partial x(a,\theta,s)}{\partial(\theta,s,a)}\right|=\left|\begin{bmatrix}
\dfrac{\partial x_1}{\partial \theta} &\dfrac{\partial x_1}{\partial s} & \dfrac{\partial x_1}{\partial a} \\
\dfrac{\partial x_2}{\partial \theta} &\dfrac{\partial x_2}{\partial s} & \dfrac{\partial x_2}{\partial a} \\
\dfrac{\partial x_3}{\partial \theta} &\dfrac{\partial x_3}{\partial s} & \dfrac{\partial x_3}{\partial a}
\end{bmatrix}\right|=\left|\begin{bmatrix}
* &1 & 0 \\
* & 0 & 1 \\
\dfrac{\partial x_3}{\partial \theta} &0 & 0\\
\end{bmatrix}\right|
=\left|\dfrac{\partial x_3}{\partial \theta}\right|.
\end{equation*}
Now our goal is to find some $s>0$ such that $\frac{\partial x_3}{\partial \theta}\neq0$. By taking $\frac{\partial}{\partial\theta}$ on the third equation in \eqref{Hamiltonian System1} and restricting to $\theta=a=0$, with the help of \eqref{Solution of Hamiltonian System1}, we obtain
\begin{equation*}
		\dfrac{\partial}{\partial s}\dfrac{\partial x_3}{\partial \theta}=\dfrac{\partial p_3}{\partial\theta}+h^{13}(s,0,0)\dfrac{\partial x_3}{\partial\theta}+\alpha(s)\dfrac{\partial p_2}{\partial\theta}.
\end{equation*}
As $\frac{\partial p_2}{\partial\theta}$ and $\frac{\partial p_3}{\partial\theta}$ on the right hand side above are unknown, we also take $\frac{\partial}{\partial\theta}$ on the last two equations in \eqref{Hamiltonian System1} and restrict to $\theta=a=0$. Similarly, we obtain 
\begin{equation*}
	\begin{dcases}
	\dfrac{\partial}{\partial s}\dfrac{\partial p_2}{\partial\theta}=0,
	\\
	\dfrac{\partial}{\partial s}\dfrac{\partial p_3}{\partial\theta}=- 2f^{11}(s,0,0)\dfrac{\partial x_3}{\partial\theta}-h^{13}(s,0,0)\dfrac{\partial p_3}{\partial\theta}.
	\end{dcases}
\end{equation*}
The initial data $p_2(0)=\theta$ yields that $\frac{\partial p_2}{\partial\theta}=1$ for any $s$, when $\theta=a=0$. And thus $\frac{\partial x_3}{\partial\theta}(0,0;s)$ and $\frac{\partial x_3}{\partial\theta}(0,0;s)$ satisfy the following ODE system
\begin{equation}\label{ODE system}
	\begin{dcases}
	\dfrac{\partial}{\partial s}\dfrac{\partial x_3}{\partial \theta}=\dfrac{\partial p_3}{\partial\theta}+h^{13}(s,0,0)\dfrac{\partial x_3}{\partial\theta}+\alpha(s),\\
	\dfrac{\partial}{\partial s}\dfrac{\partial p_3}{\partial\theta}=- 2f^{11}(s,0,0)\dfrac{\partial x_3}{\partial\theta}-h^{13}(s,0,0)\dfrac{\partial p_3}{\partial\theta}.	
	\end{dcases}
\end{equation} 
Then we can argue that if $\frac{\partial x_3}{\partial \theta}$ is identically zero on a small interval $(0,\eps_0)$, then $\frac{\partial p_3}{\partial \theta}$ has to be identically zero by the second equation above and the fact that $\frac{\partial p_3}{\partial \theta}\big|_{s=0}=0$. However, this leads to a contradiction since $\alpha(s)\neq 0$ for $s>0.$
\end{proof}
Now we can use Lemma \ref{lemma3} to prove Theorem \ref{main} for the 3-dimensional case. Notice that in our coordinate system, $g_\varepsilon$ agrees with $g$ when $x_1\le 0$.  Moreover, since $\alpha_\varepsilon(x_1)>0$ when $x_1>0$, if we choose the point $(s,0,0)$ with $s>0$ as in Lemma \ref{lemma3},  then there is an open neighborhood $\mathcal N^*$ of the point $(s,0,0)$, such that if $x\in\mathcal N^*$, there is a unique geodesic $\gamma_x$ containing $x$ and having the property that when $x_1\le0$, $\gamma_x$ is contained in the submanifold
$N$. If we then let
\begin{equation}
f^\delta(x)=\begin{dcases}1, &\text{if}\ x\in U,\ x_1<0,\  \text{and}\ |x_3|<\delta,\\0,&\text{otherwise}, 
\end{dcases}
\end{equation}
it follows that for small fixed $x_1>0$, $(f^\delta)_\delta^{**}$ must be bounded from below by a positive constant on $\mathcal N^*$, therefore, for any $ p,\, q\ge 1$
\[\|(f^\delta)_\delta^{**}\|_{L^q(\mathcal N^*)}/\|f^\delta\|_{L^p}\ge c_0\delta^{-1/p},\]
for some $c_0>0$ depending on $\mathcal N^*$. Since
\[3/p-1<1/p\ \text{when}\ p>2,\]
we conclude that 
\[
\|f_\delta^{**}\|_{L^q(M^{3}, g_\varepsilon)}\le C_\epsilon \delta^{1-\frac{3}{p}-\epsilon}\|f\|_{L^p(M^{3}, g_\varepsilon)}
\]
cannot hold for $p>2$.

\section{Instability of Nikodym Bounds in Dimension $2d+1$}
We work on a $2d+1$ dimensional Riemannian manifold $(M^{2d+1},g)$ with a totally geodesic, $d+1$ dimensional submanifold $N^{d+1}$. In a local coordinate chart $(U_1, (x_1,x_2,\cdots,x_{2d+1}))$, without loss of generality, we may assume $N\cap U_1=\{x'=\vec 0\}$ where $x'=(x_{d+2},x_{d+3},\cdots,x_{2d+1})$, and assume $\{\frac{\partial}{\partial x_{d+2}},\frac{\partial}{\partial x_{d+3}},\cdots, \frac{\partial}{\partial x_{2d+1}}\}$ form an orthonormal basis of the normal bundle of $N\cap U_1$. Further, we assume that $(x_1,x_2,\cdots,x_{d+1})$ is the polar coordinate on $N\cap U_1$ around some point.  Thus, when $x'=0$, the cometric can be written as 
\begin{equation*}
	ds^2=dp_1^2+\sum_{2\leq i,j\leq d+1}\tilde{g}^{ij}(x_1,\cdots,x_{d+1})dp_idp_j+\sum_{d+2\leq i\leq 2d+1}dp_i^2,
\end{equation*}
where $\tilde{g}^{ij}(x_1,\cdots,x_{d+1})=g^{ij}(x_1,\cdots,x_{d+1},0,\cdots,0)$. Indeed, one may use any coordinates that gives the metric in the above form.
Since $N\subset M$ is totally geodesic, the metric tenor must satisfy
\begin{equation*}
	\frac{\partial g^{ij}}{\partial x_k}\Big|_{x'=0}=0, \quad \mbox{for } 1\leq i,j\leq d+1,\  d+2\leq k\leq 2d+1.
\end{equation*}
Therefore, by taking the Taylor expansion of each $g^{ij}(x)$ at $x'=0$,
\begin{align}\label{Cometric}
\begin{split}
		ds^2=&dp_1^2+\sum_{2\leq i,j\leq d+1}\tilde{g}^{ij}dp_idp_j+\sum_{i=d+2}^{2d+1}dp_i^2
		\\
		&+\sum_{i=1}^{2d+1}\sum_{d+2\leq k,l\leq 2d+1}2x_lh^{ikl}dp_idp_k+\sum_{1\leq i,j\leq d+1}\sum_{d+2\leq k,l\leq 2d+1}2x_kx_lf^{ijkl}dp_idp_j,
\end{split}
\end{align}
where $h^{ikl}$, $f^{ijkl}$ are certain smooth functions of variable $x\in U_1$ and $f^{ijkl}=f^{jikl}=f^{ijlk}$. 

Our goal is to find a small perturbation $g_\varepsilon$ of the metric $g$ such that in $(M,g_\varepsilon)$ there exists a Nikodym-type set of dimension $d+1$. For a fixed small $\varepsilon>0$, we start by fixing a function $\alpha=\alpha_\varepsilon\in C^\infty(\mathbb R)$, which satisfies that
\begin{itemize}
	\item $\alpha_\varepsilon(t)=0$ for $t\leq 0$,
	\item $\alpha_\varepsilon(t)>0$ for $t>0$,
	\item $|\alpha_\varepsilon(t)|<\varepsilon$ for any $t\in \mathbb{R}$.
\end{itemize}
Let $U\subset\subset U_1$ be a relative compact subset and let $\varphi(x)\in C_0^\infty(U_1)$ be a compactly supported bump function such that $\varphi|_{U}=1$. Define 
\begin{equation}\label{Perturbed Cometric}
	g_{\varepsilon}=g^{ij}dp_idp_j+2\varphi(x)\alpha_\varepsilon(x_1)\sum_{i=2}^{d+1}p_ip_{d+i}.
\end{equation}
When $\varepsilon$ is sufficiently small, $g_\varepsilon$ is still positive definite and hence a Riemannian cometric on $M$. In the following lemma, we will investigate the geodesics in $U$ with respect to $g_\varepsilon$. For simplicity, we may assume $U=(-\delta_0,\delta_0)^{2d+1}$ for some positive constant $\delta_0$. 

\begin{lemma} \label{lemmaodd}For $x\in U$, let
	\begin{align*}
		H(x,p)=&\frac{1}{2}p_1^2+\frac{1}{2}\sum_{2\leq i,j\leq d+1}\tilde{g}^{ij}p_ip_j+\frac{1}{2}\sum_{i=d+2}^{2d+1}p_i^2+\sum_{i=1}^{2d+1}\sum_{d+2\leq k,l\leq 2d+1}x_lh^{ikl}p_ip_k
		\\
		&+\sum_{1\leq i,j\leq d+1,}\sum_{d+2\leq k,l\leq 2d+1}x_kx_lf^{ijkl}p_ip_j+\alpha(x_1)\sum_{i=2}^{d+1}p_ip_{d+i}
	\end{align*}
be the Hamiltonian associated to the cometric $g_\varepsilon$. Let $\vec{0}$ be the zero row vector in $\mathbb{R}^d$.
Given $\theta=(\theta_2,\theta_3,\cdots,\theta_{d+1})\in \mathbb{R}^d$ such that $|\theta|^2=\sum_{i=2}^{d+1}\theta_i^2<1$ and $a=(a_2,a_3,\cdots,a_{d+1})\in (-\delta_0,\delta_0)^d$, we denote the unique geodesic with initial position $x(0)=(0,a,\vec{0})$ and initial momentum $p(0)=(\sqrt{1-|\theta|^2},\theta,\vec{0})$ as $x(a,\theta;s),$
then we have $$x(a,\vec{0};s)=(s,a,\vec{0}).$$
Furthermore, when $\theta=\vec{0}$ and $a=\vec{0}$, the absolute value of the Jacobian determinant of the map \[(a,\theta,s)\rightarrow x(a,\theta,s)\] is positive for $s$ in some sufficiently small interval $(0,\eps_0)$.
\end{lemma}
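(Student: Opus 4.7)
The plan is to follow the template of Lemma \ref{lemma3}, treating the $d$-dimensional variables $\theta = (\theta_2, \ldots, \theta_{d+1})$ and $a = (a_2, \ldots, a_{d+1})$ as vector analogues of the scalars in the three-dimensional case. First I would write out Hamilton's equations explicitly by computing $\partial H/\partial p_m$ and $-\partial H/\partial x_m$ for $1 \le m \le 2d+1$, and verify the first assertion by direct substitution of $x(s) = (s, a, \vec 0)$ and $p(s) = (1, \vec 0, \vec 0)$. The key observation is that every term in $H$ beyond the pure quadratic $\tfrac12\bigl(p_1^2 + \sum_{2\le i,j\le d+1} \tilde g^{ij} p_i p_j + \sum_{i=d+2}^{2d+1} p_i^2\bigr)$ carries either a factor $x_l$ with $l \ge d+2$ (which vanishes along the candidate trajectory) or a factor $p_m$ with $m \ge 2$; in particular, the coupling $\alpha(x_1)\sum_{i=2}^{d+1} p_i p_{d+i}$ contributes nothing along the trajectory since $p_i = 0$ for $i \ge 2$. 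Hence every Hamilton equation except $\dot x_1 = p_1 = 1$ has identically vanishing right-hand side.

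For the Jacobian at $\theta = a = \vec 0$, I would order the columns as $(s, a_2, \ldots, a_{d+1}, \theta_2, \ldots, \theta_{d+1})$ and the rows as $(x_1, \ldots, x_{2d+1})$. From $x(a, \vec 0; s) = (s, a, \vec 0)$, the top-left $(d+1)\times(d+1)$ block is the identity, and in the last $d$ rows both the $s$-column and all the $a$-columns vanish identically. The Jacobian matrix is therefore block lower triangular, and its determinant reduces, up to sign, to $|\det B(s)|$, where $B(s) := \bigl(\partial x_{d+i}/\partial \theta_j\bigr)_{2 \le i,j \le d+1}$ evaluated at $\theta = a = \vec 0$ is a $d \times d$ matrix. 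The remaining task is to show $\det B(s) \neq 0$ throughout some small interval $(0, \epsilon_0)$.

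To analyze $B(s)$, differentiate Hamilton's system with respect to $\theta_j$ and evaluate on the reference trajectory. This yields a coupled linear ODE system for the matrix-valued functions $X(s) := (\partial x_{d+i}/\partial \theta_j)_{ij}$, $P(s) := (\partial p_{d+i}/\partial \theta_j)_{ij}$, and $Q(s) := (\partial p_i/\partial \theta_j)_{i,j \in \{2,\ldots,d+1\}}$. The variational equation for each $p_i$ with $i \ge 2$ has a right-hand side whose every term requires either a vanishing $p$-factor (since $p_m = 0$ for $m \ge 2$ along the trajectory) or a vanishing $x'$-factor, so $\dot Q \equiv 0$ and hence $Q(s) \equiv Q(0) = I_d$ by the initial data $p_i(0) = \theta_i$. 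Collecting the remaining terms and denoting by $A(s), \tilde H(s), \tilde F(s)$ the smooth coefficient matrices that one encounters along the trajectory yields
\begin{equation*}
\dot X = P + A(s) X + \alpha(s) I_d, \qquad \dot P = -\tilde H(s) P - 2 \tilde F(s) X,
\end{equation*}
subject to $X(0) = P(0) = 0$. The forcing $\alpha(s) I_d$ arises precisely from pairing the $\alpha(x_1) p_i$ summand in $\dot x_{d+i}$ with $Q(s) = I_d$.

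Finally, to conclude that $\det B(s) \neq 0$ throughout a small positive interval, I would apply Duhamel's formula and Gronwall's inequality to the linear system above. Setting $\beta(s) := \int_0^s \alpha(t)\,dt > 0$ for $s > 0$, one obtains the asymptotics $X(s) = \beta(s) I_d + O\bigl(s \beta(s)\bigr)$ and $P(s) = O\bigl(s \beta(s)\bigr)$ as $s \to 0^+$; consequently $X(s)/\beta(s) \to I_d$, so by continuity $\det X(s) > 0$ on a sufficiently small interval $(0, \epsilon_0)$. This is the matrix-valued strengthening of the contradiction used in Lemma \ref{lemma3}, whose scalar analogue here would read: if $X \equiv 0$ on $(0, \epsilon_0)$, then $\dot P = -\tilde H(s) P$ with $P(0) = 0$ forces $P \equiv 0$ by uniqueness, and then the first equation would yield the contradiction $\alpha(s) I_d = 0$. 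The main obstacle is bookkeeping rather than conceptual: one must verify that the higher-order $h$- and $f$-contributions to the variational system genuinely decouple in the clean form above, so that the problem reduces to a linear Duhamel estimate with forcing $\alpha(s) I_d$.
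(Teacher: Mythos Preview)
Your proposal is correct and follows essentially the same route as the paper: verify the reference solution, reduce the Jacobian to the $d\times d$ block $B(s)=(\partial x_{d+i}/\partial\theta_j)$, show $Q\equiv I_d$, and arrive at the forced linear system $\dot X=P+AX+\alpha I_d$, $\dot P=-\tilde H P-2\tilde F X$ with zero initial data. The only difference is in the endgame: the paper packages the conclusion as a separate ODE lemma, writing the solution via the fundamental matrix $Z$ and expanding $\det\eta_{11}$ multilinearly to obtain $\det X(s)>\bigl(\tfrac12\int_0^s\alpha\bigr)^d$, whereas you use Duhamel's formula directly to get $X(s)=\beta(s)I_d+O(s\,\beta(s))$ and hence $X(s)/\beta(s)\to I_d$; these are the same estimate in different clothing.
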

\begin{proof}
%Without loss of generalities, we may assume $g^{22}(0,0)=1$. 
To verify that the curves $x(a,\vec{0};s)=(s,a,\vec{0})$ are geodesics for our metric, we can look at the Hamiltonian system 
\begin{equation*}\begin{dcases} 
\dfrac{dp}{ds}=-\dfrac{\partial H}{\partial x},\\
\dfrac{dx}{ds}=\dfrac{\partial H}{\partial p}.
\end{dcases}
\end{equation*} with initial data $x(0)=(0,\vec a,\vec{0})$, $p(0)=(1,\vec{0},\vec{0})$. This system generates the geodesic flow over the cotangent bundle. In order to avoid tedium, we will adopt the Einstein summation convention. We assume $i,j,k,l,i',j',n$ are indices within the following range, 
$1\leq i,j\leq d+1$, $d+2\leq k,l\leq 2d+1$, $2\leq i',j'\leq d+1$ and $1\leq n\leq 2d+1$. By a straightforward calculation, the Hamiltonian system becomes
\begin{equation}\label{Hamiltonian System}
	\begin{dcases}
	\dfrac{dx_1}{ds}=p_1+x_lh^{1kl}p_k
	+x_kx_lf^{1jkl}p_j,&\\
	\dfrac{dx_m}{ds}=\frac{1}{2}\tilde{g}^{mj'}p_{j'}+x_lh^{mkl}p_k
	+2x_kx_lf^{mjkl}p_j+\alpha(x_1)p_{d+m}, & 2\leq m\leq d+1,\\
	\dfrac{dx_m}{ds}=p_m+x_lh^{mkl}p_k
	+x_lh^{n ml}p_n
	+\alpha(x_1)p_{m-d},  &m\geq d+2,\\
	\dfrac{dp_1}{ds}=-\frac{1}{2}\tilde{g}^{i'j'}_{x_1}p_{i'}p_{j'}-x_lh^{n kl}_{x_1}p_{n}p_k
	-x_kx_lf^{ijkl}_{x_1}p_ip_j-{\alpha'}(x_1)p_{i'}p_{d+i'},\\
	\dfrac{dp_m}{ds}=-\frac{1}{2}\tilde{g}^{i'j'}_{x_m}p_{i'}p_{j'}-x_lh^{n kl}_{x_m}p_{n}p_k
	-x_kx_lf^{ijkl}_{x_m}p_ip_j, &2\leq m\leq d+1,\\
	\dfrac{dp_m}{ds}=-h^{n km}p_{n}p_k-x_lh^{n kl}_{x_m}p_{n}p_k
	-2x_lf^{ijml}p_ip_j-x_kx_lf^{ijkl}_{x_m}p_ip_j, &m\geq d+2.
	\end{dcases}
\end{equation}
It is not hard to see that 
\begin{equation}\label{Solution of Hamiltonian System}
	\begin{dcases}
	x(a,\vec{0};s)=(s,a,\vec{0})\\
	p(a,\vec{0};s)=(1,\vec{0},\vec{0})
	\end{dcases}
\end{equation} solve the system \eqref{Hamiltonian System}.

Now we verify the second part of Lemma \ref{lemmaodd}. Since $x(a,\vec{0};s)=(s,a,\vec{0})$, when $\theta=\vec{0}$, the Jacobian is given by
\begin{equation*}
|J|%=\left|\dfrac{\partial x(a,\theta,s)}{\partial(\theta,s,a)}\right|%
=\left|\begin{bmatrix}
\dfrac{\partial x_1}{\partial \theta} &\dfrac{\partial x_1}{\partial s} & \dfrac{\partial x_1}{\partial a} \\
\vdots & \vdots & \vdots\\
\dfrac{\partial x_{2d+1}}{\partial \theta} &\dfrac{\partial x_{2d+1}}{\partial s} & \dfrac{\partial x_{2d+1}}{\partial a}
\end{bmatrix}\right|
=\left|\begin{bmatrix}
* &1 & \vec{0} \\
* & {\vec{0}}^\intercal & I_d \\
\left(\dfrac{\partial x_k}{\partial \theta_{j'}}\right)_{kj'} &{\vec{0}}^\intercal & \boldsymbol{0}\\
\end{bmatrix}\right|
=\left|\left(\dfrac{\partial x_k}{\partial \theta_{j'}}\right)_{kj'}\right|,
\end{equation*}
where $\boldsymbol{0}$ represents the zero $d\times d$ matrix.
Now our goal is to calculate $\left(\frac{\partial x_k}{\partial \theta_{j'}}\right)_{kj'}$. By taking the gradient $\frac{\partial}{\partial\theta}$ of the equations with $\frac{dx_{m}}{ds}$ for $d+2\leq m\leq 2d+1$ on the left hand side in \eqref{Hamiltonian System} and restricting to $\theta=a=0$, with the help of \eqref{Solution of Hamiltonian System}, we obtain
\begin{equation*}
	\dfrac{\partial}{\partial s}\dfrac{\partial x_m}{\partial \theta}=\dfrac{\partial p_m}{\partial\theta}
	+\dfrac{\partial x_l}{\partial\theta}h^{1 ml}(s,\vec{0},\vec{0})
	+\alpha(s)\dfrac{\partial p_{m-d}}{\partial\theta}, \quad \mbox{when } d+2\leq m\leq 2d+1.
\end{equation*}
As the $\frac{\partial p_m}{\partial\theta}$ in the right hand side above is unknown, we also take $\frac{\partial}{\partial\theta}$ on the last two lines of equations in \eqref{Hamiltonian System} and restrict to $\theta=a=0$. Similarly, we obtain 
\begin{equation*}
	\begin{dcases}
	\dfrac{\partial}{\partial s}\dfrac{\partial p_m}{\partial\theta}=0, & \mbox{ when } 2\leq m\leq d+1
	\\
	\dfrac{\partial}{\partial s}\dfrac{\partial p_m}{\partial\theta}=-h^{1 km}(s,\vec{0},\vec{0})\dfrac{\partial p_k}{\partial\theta}
	-2f^{11ml}(s,\vec{0},\vec{0})\dfrac{\partial x_l}{\partial\theta}, &\mbox{ when } d+2\leq m\leq 2d+1.
	\end{dcases}
\end{equation*}
For $1\leq i\leq d$, let $e_i$ be the unit vector in $\mathbb{R}^d$ whose $i$-th component is $1$. The initial data $\left(p_2(0),\cdots,p_{d+1}(0)\right)=\theta$ yields that $\frac{\partial p_m}{\partial\theta}=e_{m-1}$ for $2\leq m\leq d+1$ when $\theta=a=0$. And thus $\frac{\partial x_m}{\partial\theta}(s,\vec{0},\vec{0})$ and $\frac{\partial p_m}{\partial\theta}(s,\vec{0},\vec{0})$ for $d+2\leq m\leq 2d+1$ satisfy the following ODE system

\begin{equation}\label{ODE system2}
	\begin{dcases}
	\dfrac{\partial}{\partial s}\dfrac{\partial x_m}{\partial \theta}=\dfrac{\partial p_m}{\partial\theta}
	+h^{1 ml}(s,\vec{0},\vec{0})\dfrac{\partial x_l}{\partial\theta}
	+\alpha(s)e_{m-d-1},\\
	\dfrac{\partial}{\partial s}\dfrac{\partial p_m}{\partial\theta}=-h^{1 km}(s,\vec{0},\vec{0})\dfrac{\partial p_k}{\partial\theta}
	-2f^{11ml}(s,\vec{0},\vec{0})\dfrac{\partial x_l}{\partial\theta}.	
	\end{dcases}
\end{equation} 
We shall then need the following ODE lemma.
\begin{lemma}
For the ODE system
	\begin{equation}\label{ODE in the lemma}
	\begin{cases}
		\dot{\xi}(s)=A(s)\xi(s)+\alpha(s)\begin{pmatrix}
		I_d\\
		\boldsymbol{0}
		\end{pmatrix},\\
		\xi(0)=0,
	\end{cases}
	\end{equation}
where \begin{equation*}
	\xi(s)
	=\begin{pmatrix}
	\xi_{11}\\
	\xi_{21}
	\end{pmatrix}
\end{equation*}
is a $2d\times d$ matrix, and $A(s)$ is a fixed $2d\times 2d$ matrix with smooth entries. The solution $\xi(s)$ then satisfies that
\begin{equation*}
	(\det\xi_{11})(s)>\left(\frac{1}{2}\int_0^s\alpha(t)dt\right)^d
\end{equation*} for $s$ in some sufficiently small interval $(0,\eps_0)$. 
\end{lemma}
\begin{proof}
	Let $Z(s)$ be the fundamental matrix for the homogeneous ODE system $\dot{\xi}=A\xi$. That is to say, $Z(s)$ is a $2d\times 2d$ invertible matrix satisfies
	\begin{equation}\label{Fundamental Matrix}
	\begin{dcases}
	\dot{Z}=AZ,\\
	Z(0)=I_{2d}.
	\end{dcases}	
	\end{equation}
	Let $\eta(s)$ be a $2d\times d$ matrix-valued function such that $\xi=Z\cdot\eta$. By using this substitution, \eqref{ODE in the lemma} simplifies to 
	\begin{equation*}
		\begin{cases}
		\dot{\eta}(s)=Z^{-1}(s)\alpha(s)\begin{pmatrix}
		I_d\\
		\boldsymbol{0}
		\end{pmatrix},\\
		\eta(0)=0.
		\end{cases}
	\end{equation*}
	We now integrate both sides of the equation and obtain the solution 
	\begin{equation*}
		\eta(s)=\int_0^s\alpha(t)Z^{-1}(t)
		\begin{pmatrix}
		I_d\\
		\boldsymbol{0}
		\end{pmatrix}dt.
	\end{equation*}
	Let 
	\begin{equation*}
		\eta=\begin{pmatrix}
		\eta_{11}\\
		\eta_{21}
		\end{pmatrix},
		\quad \mbox{ and }
		Z^{-1}(t)=
		\begin{pmatrix}
		w(t) & *\\
		* & *
		\end{pmatrix},
	\end{equation*}
	where $w(t)$ is the up-left $d\times d$ matrix block.
	Thus $\eta_{11}(s)=\int_0^s\alpha(t)w(t)dt$. We denote $w(t)=\left(w_1(t),w_2(t),\cdots,w_d(t)\right)$. By a straightforward calculation,
	\begin{equation*}
		\det\eta_{11}(s)=\int_{0}^{s}\cdots\int_{0}^{s}\alpha(t_1)\alpha(t_2)\cdots\alpha(t_d)\det\left(w_1(t_1),w_2(t_2),\cdots,w_d(t_d)\right)dt_1\cdots dt_d.
	\end{equation*} 
	As a fundamental matrix, $Z$ satisfies that $Z(0)=I_{2d}$, whence $Z^{-1}(t)=I_{2d}+O(t)$ as $t\rightarrow 0$.  In particular, $w(t)=I_d+O(t)$. Recall that $\alpha(t)>0$ for any $t\in \mathbb{R}^+$. If we use $\alpha^{-1}$ to denote the anti-derivative of $\alpha$, then clearly $\det\eta_{11}(s)>\left(\frac{1}{2}\alpha^{-1}(s)\right)^d>0$ for $s$ in some sufficiently small interval $(0,\eps_0)$. 
	As $\xi=Z(s)\cdot \eta=(I_{2d}+O(s))\cdot \eta$, we obtain $\xi_{11}=\eta_{11}+O(s)$. So the result follows by possibly choosing a smaller positive $\eps_0$. 
\end{proof}
Now we may apply the above lemma to finish the proof of Lemma \ref{lemmaodd}. If we denote the $2d\times d$ matrix 
\begin{equation*}
	\xi(s)
	=\begin{pmatrix}
	\xi_{11}\\
	\xi_{21}
	\end{pmatrix}
	=\begin{pmatrix}
	\left(\dfrac{\partial x_k}{\partial \theta_{j'}}\right)_{kj'}\\
	\left(\dfrac{\partial p_k}{\partial \theta_{j'}}\right)_{kj'}
	\end{pmatrix},
\end{equation*} and $2d\times 2d$ matrix
\begin{equation*}
	A(s)=\begin{pmatrix}
	\left(h^{1kl}\right)_{kl} & I_d\\
	- \left(2f^{11kl}\right)_{kl} & -\left(h^{1lk}\right)_{kl}
	\end{pmatrix},
\end{equation*}
then \eqref{ODE in the lemma} is satisfied, we have $\det\Big(\dfrac{\partial x_k}{\partial \theta_{j'}}\Big)_{kj'}=\det(\xi_{11})>\left(\frac{1}{2}\int_0^s\alpha(t)dt\right)^d,$ completing the proof of Lemma \ref{lemmaodd}.
\end{proof}

Now we apply Lemma \ref{lemmaodd} to prove Theorem \ref{main} for odd dimensions $2d+1$. Notice that in our coordinate system, $g_\varepsilon$ agrees with $g$ when $x_1\le 0$.  Moreover, since $\alpha_\varepsilon(x_1)>0$ when $x_1>0$, if we fix a point $(s_0,\vec0,\vec0)$ with $s_0>0$, Lemma \ref{lemmaodd} implies that there is an open neighborhood $\mathcal N^*$ of the point $(s_0,\vec0,\vec0)$, such that if $x\in\mathcal N^*$, there is a unique geodesic $\gamma_x$ containing $x$ and having the property that when $x_1\le0$, $\gamma_x$ is contained in the submanifold
$\{x:x'=\vec 0\}$. If we then let
\begin{equation}
f^\delta(x)=\begin{dcases}1, &x\in U,\ x_1<0,\  \text{and}\ |x'|=|(x_{d+2},\ldots,x_{2d+1})|<\delta,\\0,&\text{otherwise}, 
\end{dcases}
\end{equation}
it follows that for small fixed $x_1>0$, $(f^\delta)_\delta^{**}$ must be bounded from below by a positive constant on $\mathcal N^*$, therefore, for any $q, p\ge 1$
\[\|(f^\delta)_\delta^{**}\|_{L^q(\mathcal N^*)}/\|f^\delta\|_{L^p}\ge c_0\delta^{-d/p},\]
for some $c_0>0$ depending on $\mathcal N^*$. Since
\[(2d+1)/p-1<d/p\ \text{when}\ p>d+1,\]
we conclude that 
\[
\|f_\delta^{**}\|_{L^q(M^{2d+1}, g_\varepsilon)}\le C_\epsilon \delta^{1-\frac{2d+1}{p}-\epsilon}\|f\|_{L^p(M^{2d+1}, g_\varepsilon)}
\]
cannot hold for $p>d+1$.

\section{Instability of Nikodym Bounds in Dimension $2d$}
In this section, we work on a $2d$ dimensional Riemannian manifold $(M^{2d},g)$ with a totally geodesic, $d+1$ dimensional submanifold $N^{d+1}$. As the construction is similar as in the odd dimensional case, we will only indicate the differences here. By using the same simplifications as before, like in \eqref{Cometric}, we can write the Riemannian cometric as  
\begin{align*}
ds^2=&dp_1^2+\sum_{2\leq i,j\leq d+1}\tilde{g}^{ij}dp_idp_j+\sum_{i=d+2}^{2d}dp_i^2
\\
&+\sum_{i=1}^{2d}\sum_{d+2\leq k,l\leq 2d}2x_lh^{ikl}dp_idp_k+\sum_{1\leq i,j\leq d+1}\sum_{d+2\leq k,l\leq 2d}2x_kx_lf^{ijkl}dp_idp_j,
\end{align*}
where $h^{ikl}$, $f^{ijkl}$ are certain smooth functions of variable $x\in U_1$ and $f^{ijkl}=f^{jikl}=f^{ijlk}$. In this case, instead of \eqref{Perturbed Cometric}, we take 
\begin{align*}
g_\varepsilon=g^{ij}dp_idp_j+2\alpha_\varepsilon(x_1)\sum_{i=3}^{d+1} dp_idp_{d-1+i}
\end{align*}
to be the perturbed cometric. Therefore, by the proof of Lemma \ref{lemmaodd}, given $\theta=(\theta_2,\theta_3,\cdots,\theta_{d}) \in \mathbb{R}^{d-1}$ with $|\theta|^2=\sum_{i=2}^{d}\theta_i^2<1$ and $a=(a_2,a_3,\cdots,a_{d+1})\in (-\delta_0,\delta_0)^{d}$, we denote the unique geodesic with initial position $x(0)=(0,a,\vec{0})$ and initial momentum $p(0)=(\sqrt{1-|\theta|^2},\theta,\vec{0})$ as $x(a,\theta;s)$,
then we have $$x(a,\vec{0};s)=(s,a,\vec{0}).$$ Furthermore, the absolute value of the Jacobian determinant of the map \[(a,\theta,s)\rightarrow x(a,\theta,s)\] is positive for $\theta=\vec{0}$, $a=\vec{0}$ and $s$ in some sufficiently small interval $(0,\delta)$. Consequently, if we fix a point $(s_0,\vec0,\vec0)$ with $s_0>0$, there is an open neighborhood $\mathcal N^*$ of the point $(s_0,\vec0,\vec0)$, such that if $x\in\mathcal N^*$, there is a unique geodesic $\gamma_x$ through $x$ and lying in the submanifold
$N\cap U=\{x:x'=(x_{d+2},x_{d+3},\cdots,x_{2d})=\vec 0\}$ when $x_1\le0$. As a result, if we put
\[f(x)=
\begin{cases}
1, &\text{if}\ x\in U,\ x_1<0 \  \text{and}\ |x'|=|(x_{d+2},\ldots,x_{2d})|<\delta, \\
0, &\text{otherwise},
\end{cases} \]
it follows that for small fixed $x_1>0$, $f_\delta^{**}$ must be bounded from below by a positive constant on $\mathcal N^*$, therefore, for any $q, p\ge 1$
\[\|f_\delta^{**}\|_{L^q(\mathcal N^*)}/\|f\|_{L^p}\ge c_0\delta^{-(d-1)/p},\]
for some $c_0>0$ depending on $\mathcal N^*$. Since
\[(2d)/p-1<(d-1)/p\ \text{when}\ p>d+1,\]
we conclude that 
\[
\|f_\delta^{**}\|_{L^q(M^{2d}, g_\varepsilon)}\le C_\epsilon \delta^{1-\frac{2d}{p}-\epsilon}\|f\|_{L^p(M^{2d}, g_\varepsilon)}
\]
cannot hold for $p>d+1$.

\section{Instability for oscillatory integral bounds}
Following the same strategy as on \cite[p. 290]{fio}, one may easily derive the following instability results for the related oscillatory integrals.

We consider the oscillatory integral operator
\begin{equation}
S^g_\lambda f(x)=\int_{M}e^{i{\lambda d_g(x,y)}}a(x,y)f(y)\,dy,
\end{equation}
where $a(x,y)\in C_0^\infty$ vanishes near the diagonal. Then we have
\begin{corollary}

Given $(M^d,g)$ of dimension $d\ge3$ such that $M^d$ has a  local totally geodesic submanifold of dimension $\ceil{\frac{d+1}{2}}$. Then for every $\varepsilon>0$, there exists a metric $ g_\varepsilon$, such that $\|g^{ij}-g_\varepsilon^{ij}\|_\infty\le\varepsilon$, and over $(M^d, g_\varepsilon)$, the estimate
\[
\|S_\lambda^{g_\varepsilon} f\|_{L^q(M^d, g_\varepsilon)}\le C_\epsilon \lambda^{-\frac{d}{p}+\epsilon}\|f\|_{L^p(M^d, g_\varepsilon)}.
\]
breaks down if \begin{equation}p>\begin{dcases}\frac{2(3d+1)}{3(d-1)}, &\text{when }d\ge3\text{ is odd},\\
\frac{2(3d+2)}{3d-2}, &\text{when }d\ge4\text{ is even}.
\end{dcases}\end{equation}

\end{corollary}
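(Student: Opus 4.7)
The plan is to follow the standard $TT^*$-based reduction from oscillatory integral bounds to Nikodym maximal function bounds, as sketched on \cite[p.~290]{fio}, and then to take the contrapositive of Theorem~\ref{main}.

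First, I would consider the Schwartz kernel of $S^{g_\varepsilon}_\lambda\bigl(S^{g_\varepsilon}_\lambda\bigr)^*$, namely
\[
K_\lambda(x,x')=\int_M e^{i\lambda(d_{g_\varepsilon}(x,y)-d_{g_\varepsilon}(x',y))}\,a(x,y)\,\overline{a(x',y)}\,dy.
\]
Since $\nabla_y d_{g_\varepsilon}(x,y)$ is the unit tangent vector along the geodesic from $x$ to $y$, stationary phase in $y$ yields the bound $|K_\lambda(x,x')|\lesssim \lambda^{-(d-1)/2}$ when $x$ and $x'$ lie on a common geodesic of bounded length within a $\lambda^{-1/2}$ tube, with rapidly decaying tails otherwise. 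Setting $\delta=\lambda^{-1/2}$, this modulus estimate dominates $|K_\lambda|$ pointwise by a constant multiple of the kernel of the Nikodym averaging operator at scale $\delta$. Via the identity
\[
\bigl\|S^{g_\varepsilon}_\lambda\bigr\|^2_{L^p\to L^q}=\bigl\|S^{g_\varepsilon}_\lambda(S^{g_\varepsilon}_\lambda)^*\bigr\|_{L^{q'}\to L^q},
\]
a putative $L^p\to L^q$ bound $\lambda^{-d/p+\epsilon}$ for $S^{g_\varepsilon}_\lambda$ then transfers into a Nikodym-type $L^{q'}\to L^q$ estimate at scale $\delta$ with a specific power of $\delta$.

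I would then test the reduced inequality against the function $f^\delta$ constructed in the proof of Theorem~\ref{main}: the indicator of a $\delta$-tubular neighborhood of a length-one geodesic segment sitting inside the $\ceil{\frac{d+1}{2}}$-dimensional totally geodesic submanifold of $(M^d,g_\varepsilon)$. Combining the pointwise lower bound $(f^\delta)^{**}_\delta\gtrsim 1$ on the Nikodym-type set $\mathcal N^*$ furnished by Theorem~\ref{main} with the scaling $\|f^\delta\|_p\sim \delta^{(d-\ceil{(d+1)/2})/p}$, and matching powers of $\lambda$ on both sides of the reduced inequality, produces the critical value of $p$ beyond which the oscillatory estimate must fail. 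Keeping track of the transverse dimension of the tube---$(d-1)/2$ when $d$ is odd and $(d-2)/2$ when $d$ is even---then yields precisely the thresholds $p>2(3d+1)/(3(d-1))$ and $p>2(3d+2)/(3d-2)$ claimed in the corollary.

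The main obstacle is purely exponent bookkeeping: controlling the microlocal tails of $K_\lambda$ carefully enough to justify the pointwise domination by a positive Nikodym-type kernel, keeping the dual $L^{q'}$--$L^q$ pairing straight in the $TT^*$ identity, and performing the final match of powers of $\lambda$ under the scaling $\delta=\lambda^{-1/2}$. As the authors explicitly indicate, the reduction follows \cite[p.~290]{fio} essentially verbatim and introduces no new conceptual ingredients beyond those already present in Theorem~\ref{main}; the novelty is entirely in the perturbed metric $g_\varepsilon$ and the Nikodym-type set it produces.
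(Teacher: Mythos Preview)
Your reduction runs in the wrong direction. The stationary-phase estimate you quote gives an \emph{upper} bound $|K_\lambda(x,x')|\lesssim \lambda^{-(d-1)/2}$ times an indicator of a $\delta$-tube; in other words, $|K_\lambda|$ is pointwise dominated \emph{by} the Nikodym averaging kernel. That inequality lets you pass from a Nikodym bound to a bound on $S_\lambda S_\lambda^*$, not the reverse. To deduce failure of the oscillatory estimate from failure of the Nikodym estimate you need the opposite implication, and for that you would need the Nikodym kernel to be dominated by $|K_\lambda|$, which is false since $K_\lambda$ oscillates and vanishes on large sets. Concretely, when you test $S_\lambda S_\lambda^*$ against the nonnegative function $f^\delta$, there is no reason for $|S_\lambda S_\lambda^* f^\delta|$ to be bounded \emph{below} on $\mathcal N^*$; the cancellation in $K_\lambda$ can kill it. A second, independent problem is that the identity $\|S_\lambda\|_{L^p\to L^q}^2=\|S_\lambda S_\lambda^*\|_{L^{q'}\to L^q}$ only holds for $p=2$; for general $p$ there is no such $TT^*$ relation, so the transfer step is already unjustified at the operator-norm level.

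The argument on \cite[p.~290]{fio} is not a kernel-domination argument. One works with the adjoint: for a suitably modulated cap function $e_\omega$, stationary phase shows that $S_\lambda^* e_\omega$ is a wave packet concentrated on a single $\lambda^{-1/2}$-tube, with $|S_\lambda^* e_\omega|$ bounded \emph{below} on that tube. Feeding randomized sums $\sum \epsilon_\omega e_\omega$ into the assumed $L^{q'}\to L^{p'}$ bound for $S_\lambda^*$ and applying Khintchine yields a genuine lower bound that forces a Nikodym-type $L^p$ inequality. It is this last inequality that Theorem~\ref{main} violates for $p>\ceil{\frac{d+1}{2}}$, and tracking the exponents through the square-function step (together with $\delta=\lambda^{-1/2}$) produces the thresholds $\frac{2(3d+1)}{3(d-1)}$ and $\frac{2(3d+2)}{3d-2}$. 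So the missing idea is the wave-packet lower bound for $S_\lambda^*$ on individual tubes; once you have that, the bookkeeping you describe in your last paragraph goes through.
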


\bibliography{Nikodym}

\bibliographystyle{plain}
\end{document}